\pgfplotsset{compat=newest}
\theoremstyle{plain}
\newtheorem{theorem}{Theorem}[section]
\newtheorem{lemma}[theorem]{Lemma}
\newtheorem{proposition}[theorem]{Proposition}
\theoremstyle{definition}
\newtheorem{remark}{Remark}[section]
\DeclareMathOperator{\dv}{div}
\DeclareMathOperator{\curl}{curl}
\newcommand{\textoverline}[1]{$\overline{\mbox{#1}}$}
\newcommand{\maketable}[1]{%
\pgfplotstabletypeset[
header=false,
font=\small,
%clear infinite,
every head row/.style={before row=\hline,after row=\hline},
every last row/.style={after row=\hline},
every row no 4/.style={before row=\hline},
every row no 8/.style={before row=\hline},
every row no 12/.style={before row=\hline},
create on use/newcol/.style={string type,
        create col/set list={,$($\ref{velocityh_simp}$\;$,-\ref{Eh_simp}$)$,,,$($\ref{velocityh_simp}$\;$,-\ref{Eh_simp}$)$,,,$($\ref{velocityh_comp}$\;$,-\ref{alphah_comp}$)$,,,$($\ref{velocityh_comp}$\;$,-\ref{alphah_comp}$)$,},
        },
create on use/newcol2/.style={
        create col/set list={,No,,,,Yes,,,,No,,,,Yes,,}
		},
columns={newcol,newcol2,0,1,2,3,4,5,6,7,8},
columns/newcol/.style={string type,column type/.add={|}{},column name={Eqn.}},
columns/newcol2/.style={string type,column type/.add={|}{},column name={Upwind}},
columns/0/.style={sci zerofill,column type/.add={|}{|},column name={$h^{-1}$}},
columns/1/.style={dec sep align={c|},sci,sci 10e,sci zerofill,precision=2,column type/.add={}{|},column name={$\|u_h-u\|$}},
columns/3/.style={dec sep align={c|},sci,sci 10e,sci zerofill,precision=2,column type/.add={}{|},column name={$\|B_h-B\|$}}, 
columns/5/.style={dec sep align={c|},sci,sci 10e,sci zerofill,precision=2,column type/.add={}{|},column name={$\|\rho_h-\rho\|$}}, 
columns/7/.style={dec sep align={c|},sci,sci 10e,sci zerofill,precision=2,column type/.add={}{|},column name={$\|p_h-p\|$}}, 
columns/2/.style={clear infinite,dec sep align={c|},fixed zerofill,precision=2,column type/.add={}{|},column name={Rate}},
columns/4/.style={clear infinite,dec sep align={c|},fixed zerofill,precision=2,column type/.add={}{|},column name={Rate}},
columns/6/.style={clear infinite,dec sep align={c|},fixed zerofill,precision=2,column type/.add={}{|},column name={Rate}},
columns/8/.style={clear infinite,dec sep align={c|},fixed zerofill,precision=2,column type/.add={}{|},column name={Rate}},
]
{#1}
}
\newcommand{\makeplot}[1]{
\addplot[blue,thick] table [x expr=\coordindex*0.02, y expr=abs(\thisrowno{0})]{#1};
\addplot[red,thick] table [x expr=\coordindex*0.02, y expr=abs(\thisrowno{1})]{#1};
\addplot[black,thick] table [x expr=\coordindex*0.02, y expr=abs(\thisrowno{2})]{#1};
\addplot[blue,dashed,thick] table [x expr=\coordindex*0.02, y expr=abs(\thisrowno{4})]{#1};
\addplot[red,dashed,thick] table [x expr=\coordindex*0.02, y expr=abs(\thisrowno{5})]{#1};
\addplot[black,dashed,thick] table [x expr=\coordindex*0.02, y expr=abs(\thisrowno{6})]{#1};
}
\newcommand{\makeplotone}[1]{
%\addplot[blue,thick] table [x expr=\coordindex*0.02, y expr=abs(\thisrowno{0})]{#1};
%\addplot[red,thick] table [x expr=\coordindex*0.02, y expr=abs(\thisrowno{1})]{#1};
\addplot[green,thick] table [x expr=\coordindex*0.02, y expr=abs(\thisrowno{3})]{#1};
\addplot[black,thick] table [x expr=\coordindex*0.02, y expr=abs(\thisrowno{2})]{#1};
\addplot[blue,dashed,thick] table [x expr=\coordindex*0.02, y expr=abs(\thisrowno{4})]{#1};
\addplot[red,dashed,thick] table [x expr=\coordindex*0.02, y expr=abs(\thisrowno{5})]{#1};
\addplot[black,dashed,thick] table [x expr=\coordindex*0.02, y expr=abs(\thisrowno{6})]{#1};
}
\begin{document}

%\title{A Method for Inhomogeneous, Incompressible MHD that Preserves Energy, Cross-Helicity, Magnetic Helicity, Incompressibility, and $\dv B = 0$}
\title{A Finite Element Method for MHD that Preserves Energy, Cross-Helicity, Magnetic Helicity, Incompressibility, and $\dv B = 0$}

\author{Evan S. Gawlik\thanks{\noindent Department of Mathematics,  University of Hawai`i at M\textoverline{a}noa, \href{egawlik@hawaii.edu}{egawlik@hawaii.edu}} \; and \; Fran\c{c}ois Gay-Balmaz\thanks{\noindent CNRS - LMD, Ecole Normale Sup\'erieure, \href{francois.gay-balmaz@lmd.ens.fr}{francois.gay-balmaz@lmd.ens.fr}}}

\date{}

\maketitle

\begin{abstract}
We construct a structure-preserving finite element method and time-stepping scheme for inhomogeneous, incompressible magnetohydrodynamics (MHD).  The method preserves energy, cross-helicity (when the fluid density is constant), magnetic helicity, mass, total squared density, pointwise incompressibility, and the constraint $\dv B = 0$ to machine precision, both at the spatially and temporally discrete levels.
\end{abstract}

\section{Introduction} \label{sec:intro}

In this paper, we construct a structure-preserving finite element method for solving the inhomogeneous, incompressible magnetohydrodynamic (MHD) equations on a bounded domain $\Omega \subset \mathbb{R}^d$, $d \in \{2,3\}$.  These equations seek a velocity field $u$, magnetic field $B$, pressure $p$, and density $\rho$ satisfying
\begin{align}
\rho(\partial_t u + u \cdot \nabla u) - (\nabla \times B) \times B &= -\nabla p, & \text{ in } \Omega \times (0,T), \label{velocity0} \\
\partial_t B - \nabla \times (u \times B) &= 0, & \text{ in } \Omega \times (0,T), \label{magnetic0} \\
\partial_t \rho + \dv (\rho u) &= 0, & \text{ in } \Omega \times (0,T), \label{density0} \\
\dv u = \dv B &= 0, & \text{ in } \Omega \times (0,T), \label{incompressible0} \\
u \cdot n = B \cdot n &= 0, & \text{ on } \partial\Omega \times (0,T), \label{BC} \\
u(0) = u_0, \, B(0) = B_0, \,  \rho(0) &= \rho_0, & \text{ in } \Omega. \label{IC}
\end{align}
The method we construct exactly preserves energy $\frac{1}{2}\int_\Omega \rho u \cdot u + B \cdot B \, dx$, cross-helicity $\int_\Omega u \cdot B \, dx$ (when $\rho \equiv 1$), magnetic helicity $\int_\Omega A \cdot B \, dx$, mass $\int_\Omega \rho \, dx$, total squared density $\int_\Omega \rho^2 \, dx$, and the constraints $\dv u = \dv B = 0$ at the spatially and temporally discrete level.  Here, $A$ denotes the magnetic potential; that is, $A$ is any vector field satisfying $\nabla \times A = B$ and $\left. A \times n \right|_{\partial\Omega} = 0$.

Our method builds upon a growing body of literature on structure preservation in incompressible MHD simulations.  Much of this literature focuses on the setting of constant density.  In that setting, researchers have constructed energy-stable schemes that preserve $\dv B=0$~\cite{hu2017stable}; energy-stable schemes that preserve $\dv u = \dv B = 0$~\cite{hiptmair2018fully}; schemes that preserve energy, cross-helicity, and $\dv u = \dv B=0$~\cite{liu2001energy,gawlik2011geometric}; and schemes that preserve energy, cross-helicity, $\int_\Omega A \, dx$, and $\dv u = \dv B = 0$ in two dimensions~\cite{kraus2017variational}.  
More recently, Hu, Lee, and Xu~\cite{hu2020helicity} constructed a finite element method for homogeneous, incompressible MHD that preserves energy, cross-helicity, magnetic helicity, and $\dv B = 0$.

Our method resembles the one proposed by Hu, Lee, and Xu~\cite{hu2020helicity}, but it differs in several key respects:
\begin{enumerate}
\item We treat the boundary conditions $\left. u \cdot n \right|_{\partial\Omega}=0$, whereas~\cite{hu2020helicity} treats the boundary conditions $\left. u \times n \right|_{\partial\Omega} = 0$.
\item Our method produces a velocity field $u$ satisfying $\dv u = 0$ pointwise in $\Omega$, whereas the computed velocity field in~\cite{hu2020helicity} only obeys this constraint in a weak sense.
\item We allow the density $\rho$ to be variable.  This introduces novel challenges, since the conserved energy $\frac{1}{2}\int_\Omega \rho u \cdot u + B \cdot B \, dx$ no longer depends quadratically on the unknowns $u$, $B$, $\rho$.  We overcome this difficulty by carefully selecting a weak formulation of~(\ref{velocity0}-\ref{incompressible0}) to discretize spatially, and by designing a time discretization that is similar but not identical to the midpoint rule.  We also show how to incorporate upwinding in the density advection without sacrificing any conservation laws other than $\int_\Omega \rho^2 \, dx$.
\end{enumerate}

Some of the techniques we use in this paper to achieve conservation of invariants in the discrete setting are adapted from our earlier work on conservative methods for the incompressible Euler equations with variable density~\cite{gawlik2020conservative}.  Our choice of weak formulation is one example.  We describe our weak formulation of~(\ref{velocity0}-\ref{IC}) in Section~\ref{sec:weak}, following closely the presentation in~\cite{gawlik2020conservative}.  We also adopt a generalization of~\cite{gawlik2020conservative}'s temporal discretization.  As observed there, a useful way to achieve energy conservation in the presence of variable density is to use the midpoint rule for all terms except one involving $u \cdot u$, which is discretized as $u_k \cdot u_{k+1}$ when stepping from time $t_k$ to $t_{k+1}$.  See Section~\ref{sec:temp} for details.  A point where we deviate from~\cite{gawlik2020conservative} is in our spatial discretization of the momentum advection term $\rho u \cdot \nabla u$.  Here, ensuring cross-helicity conservation and $\dv B=0$ requires us to adopt a different discretization of the momentum advection term than in~\cite{gawlik2020conservative}.  

We present our numerical method in dimension $d=3$, but it is straightforward to adapt our setup to dimension $d=2$; see Remark~\ref{remark:dim2}.  Note that in dimension $d=2$, magnetic helicity conservation is automatic if the constraint $\dv B =0$ holds pointwise.  This is because we may take the magnetic potential $A$ to be a vector field orthogonal to the plane containing $\Omega$ in two dimensions.  For this reason, we present in this paper two methods in dimension $d=3$: one that preserves all of the above invariants, and one that preserves all but magnetic helicity.  Both methods preserve all invariants when reduced to two dimensions, but the latter is a slightly simpler method.

This paper is organized as follows.  We start in Section~\ref{sec:weak} by writing down a weak formulation of~(\ref{velocity0}-\ref{IC}) and studying its invariants of motion.  We propose a spatial discretization in Section~\ref{sec:space}, focusing first on one that preserves all invariants except magnetic helicity.  We present an alternative spatial discretization that also preserves magnetic helicity in Section~\ref{sec:maghelicity}.  We describe how to incorporate upwinding in Section~\ref{sec:upwinding}, and we propose a temporal discretization in Section~\ref{sec:temp}.  We conclude with numerical examples in Section~\ref{sec:numerical}.

\section{Weak Formulation and Conserved Quantities} \label{sec:weak}

In this section, we derive a weak formulation of~(\ref{velocity0}-\ref{IC}) and study its invariants of motion.

Following~\cite{gawlik2020conservative}, we use the identity
\[
\rho u \cdot \nabla u = \nabla(\rho u \cdot u) - u \times (\nabla \times (\rho u)) - (u \cdot \nabla \rho) u - \frac{1}{2} \rho \nabla ( u \cdot u)
\]
and equations~(\ref{density0}-\ref{incompressible0}) to write~(\ref{velocity0}) in the form
\begin{align}
\partial_t (\rho u) + (\nabla \times (\rho u)) \times u - (\nabla \times B) \times B - \frac{1}{2}\rho \nabla(u \cdot u)   &= -\nabla \widetilde{p}, \label{velocity1}
\end{align}
where $\widetilde{p} = p + \rho u \cdot u$.
Next, we multiply~(\ref{velocity1}),~(\ref{magnetic0}),~(\ref{density0}), and~(\ref{incompressible0}) by test functions/vector fields $v$, $C$, $\sigma$, and $q$, respectively, and integrate by parts.  Using the identity
\begin{align} \label{IBP}
\int_\Omega \left( (\nabla \times w) \times u \right) \cdot v \, dx = \int_\Omega w \cdot \nabla \times (u \times v) \, dx, \quad \text{ if } \left.u \cdot n\right|_{\partial\Omega} = \left.v \cdot n\right|_{\partial\Omega} = 0,
\end{align}
we deduce the following.  For every pair of smooth vector fields $v$ and $C$ satisfying $\left. v \cdot n \right|_{\partial\Omega} = \left. C \cdot n \right|_{\partial\Omega} = 0$ and every pair of smooth scalar fields $\sigma$ and $q$, the solution $(u,B,\rho,\widetilde{p})$ of~(\ref{velocity0}-\ref{IC}) satisfies
\begin{align}
\langle \partial_t(\rho u), v \rangle + a(\rho u, u, v) - a(B,B,v) + \frac{1}{2} b(u \cdot u,\rho,v) &= \langle \widetilde{p}, \dv v \rangle, \label{velocity} \\
\langle \partial_t B, C \rangle + a(C,B,u) &= 0, \label{magnetic} \\
\langle \partial_t \rho, \sigma \rangle + b(\sigma,\rho,u) &= 0, \label{density} \\
\langle \dv u, q \rangle &= 0, \label{incompressible}
\end{align}
where $\langle u, v \rangle = \int_\Omega u \cdot v \, dx$ for vector fields $u$ and $v$, $\langle f,g \rangle = \int_\Omega f g \, dx$ for scalar fields $f$ and $g$, and
\begin{align*}
a(w,u,v) &= \langle w, \nabla \times ( u \times v ) \rangle, \\
b(f,g,w) &= -\langle w \cdot \nabla f, g \rangle.
\end{align*}
\begin{remark}
The structure of equations~(\ref{velocity}-\ref{incompressible}) is made even more transparent if one introduces the Lagrangian $\ell(u,B,\rho) = \frac{1}{2} \langle \rho u, u \rangle - \frac{1}{2}\langle B, B \rangle$ of inhomogeneous, incompressible MHD.  In terms of $\frac{\delta \ell}{\delta u} = \rho u$, $\frac{\delta \ell}{\delta B} = -B$, and $\frac{\delta \ell}{\delta \rho} = \frac{1}{2} u \cdot u$, equations~(\ref{velocity}-\ref{incompressible}) take the form
\begin{align}
\left\langle  \partial_t \frac{\delta \ell}{\delta u} , v \right\rangle  + a\left(\frac{\delta \ell}{\delta u}, u, v\right) + a\left(\frac{\delta \ell}{\delta B},B,v\right) + b\left( \frac{\delta \ell}{\delta \rho},\rho,v \right) &= \langle \widetilde{p}, \dv v \rangle, \label{velocity_lag} \\
\langle \partial_t B, C \rangle + a(C,B,u) &= 0, \label{magnetic_lag} \\
\langle \partial_t \rho, \sigma \rangle + b(\sigma,\rho,u) &= 0, \label{density_lag} \\
\langle \dv u, q \rangle &= 0. \label{incompressible_lag}
\end{align}
It is this variational structure that inspired the numerical method we propose in this paper.  We refer the reader to~\cite{gawlik2019variational} for more background.
\end{remark}

The formulation~(\ref{velocity}-\ref{incompressible}) allows one to easily deduce its invariants of motion from basic properties of the trilinear forms $a$ and $b$.  
Namely, $a$ is alternating in its last two arguments,
\begin{equation} \label{aalternating}
a(w,u,v) = -a(w,v,u), 
\end{equation}
and $b$ is alternating in its first two arguments when its last argument is divergence-free:
\begin{align} \label{balternating}
b(f,g,w) &= -b(g,f,w) \text{ if } \dv w = 0 \text{ and } \left. w \cdot n \right|_{\partial\Omega} = 0.
\end{align}
Also,
\begin{equation} \label{avanishes}
a(w,u,v) = 0 \text{ if }  \left.u \cdot n\right|_{\partial\Omega} = \left.v \cdot n\right|_{\partial\Omega} = 0 \text{ and } \nabla \times w = u,
\end{equation}
owing to~(\ref{IBP}).

These properties, together with more elementary ones, give rise to the following conservation laws.  We deduce conservation of mass by taking $\sigma = 1$ in the density equation~(\ref{density}):
\[
\frac{d}{dt} \int_\Omega \rho \, dx = \langle \partial_t \rho, 1 \rangle = -b(1,\rho,u) = 0.
\]
If instead we take $\sigma = \rho$ in~(\ref{density}) and use~(\ref{balternating}), we deduce conservation of total squared density:
\[
\frac{d}{dt} \frac{1}{2} \int_\Omega \rho^2 \, dx = \langle \partial_t \rho, \rho \rangle = -b(\rho,\rho,u) = 0.
\]
Taking $v=u$ in the momentum equation~(\ref{velocity}) and $C=B$ in the magnetic field equation~(\ref{magnetic}) gives conservation of energy:
\begin{align*}
\frac{1}{2} \frac{d}{dt} \int_\Omega \rho u \cdot u &+ B \cdot B \, dx 
= \langle \partial_t(\rho u), u \rangle - \frac{1}{2} \langle \partial_t \rho, u \cdot u \rangle + \langle \partial_t B, B \rangle \\
&= \langle \widetilde{p}, \dv u \rangle - a(\rho u, u, u) + a(B,B,u) - \frac{1}{2}b(u \cdot u, \rho, u) - \frac{1}{2} \langle \partial_t \rho, u \cdot u \rangle - a(B,B,u)  \\
&= 0.
\end{align*}
Here, we have used the fact that $\dv u = 0$, $a$ is alternating in its last two arguments, and~(\ref{density}) holds.

If $\rho \equiv 1$, then taking $v=B$ in the momentum equation~(\ref{velocity}) and $C=u$ in the magnetic field equation~(\ref{magnetic}) gives conservation of cross-helicity:
\begin{align*}
\frac{d}{dt} \int_\Omega u \cdot B \, dx 
&= \langle \partial_t u, B \rangle + \langle \partial_t B, u \rangle \\
&= \langle \widetilde{p}, \dv B \rangle - a(u, u, B) + a(B,B,B) - \frac{1}{2}b(u \cdot u, 1, B) - a(u,B,u) \\
&= 0.
\end{align*}
The last line above follows from the fact that $\dv B = 0$, $b(u \cdot u, 1, B) = -b(1, u \cdot u, B) = 0$, and $a$ is alternating in its last two arguments. 

Finally, if $A$ is any vector field satisfying $\nabla \times A = B$ and $\left. A \times n \right|_{\partial\Omega} = 0$, then conservation of magnetic helicity follows from
\begin{align*}
\frac{d}{dt} \int_\Omega A \cdot B \, dx 
&= \langle \partial_t A, B \rangle + \langle A, \partial_t B \rangle \\
&= \langle \partial_t A, \nabla \times A \rangle + \langle A, \partial_t B \rangle \\
&= \langle \nabla \times (\partial_t A), A \rangle + \langle A, \partial_t B \rangle \\
&= \langle  \partial_t B, A \rangle + \langle A, \partial_t B \rangle \\
&= -2 a(A,B,u) \\
&= 0.
\end{align*}
Here, we have used the magnetic field equation~(\ref{magnetic}) and the property~(\ref{avanishes}) of $a$.

\section{Spatial Discretization} \label{sec:space}

To construct a spatial discretization of~(\ref{velocity}-\ref{incompressible}) that preserves the invariants discussed in Section~\ref{sec:weak}, we will design discretizations of the trilinear forms $a$ and $b$ that satisfy analogues of~(\ref{aalternating}),~(\ref{balternating}), and~(\ref{avanishes}).  By a careful choice of finite element spaces, the method we construct will also preserve the constraints $\dv u = 0$ and $\dv B = 0$ pointwise.

To simplify the presentation, we first describe a spatial discretization that preserves all of the invariants mentioned above except for the magnetic helicity $\int_\Omega A \cdot B \, dx$.  For a method that also preserves magnetic helicity, see Section~\ref{sec:maghelicity}.

We will make use of the following function spaces:
\begin{align*}
H^1_0(\Omega) &= \{f \in L^2(\Omega) \mid \nabla f \in L^2(\Omega)^d, \, f=0 \text{ on } \partial\Omega \}, \\
H_0(\curl,\Omega) &= 
\begin{cases}
\{ u \in L^2(\Omega)^2 \mid \partial_x u_y - \partial_y u_x \in L^2(\Omega), \, u_x n_y - u_y n_x = 0 \text{ on } \partial\Omega \}, &\mbox{ if } d = 2, \\
\{ u \in L^2(\Omega)^3 \mid \curl u \in L^2(\Omega)^3, \, u \times n = 0 \text{ on } \partial\Omega \}, &\mbox{ if } d = 3, \\
\end{cases} \\
H_0(\dv,\Omega) &= \{u \in L^2(\Omega)^d \mid \dv u \in L^2(\Omega), \, u \cdot n = 0 \text{ on } \partial\Omega \}, \\
\mathring{H}(\dv,\Omega) &= \{u \in H_0(\dv,\Omega) \mid \dv u = 0\}, \\
L^2_{\int=0}(\Omega) &= \{f \in L^2(\Omega) \mid \textstyle\int_\Omega f \, dx = 0 \}.
\end{align*}
%We also fix a number $q>2$ and denote $U^{\dv} = H_0(\dv,\Omega) \cap L^q(\Omega)$; vector fields in $U^{\dv}$ admit traces on codimension-1 ...

Let $\mathcal{T}_h$ be a triangulation of $\Omega$, and let $\mathcal{E}_h$ denote the set of interior $(d-1)$-dimensional faces in $\mathcal{T}_h$. For each integer $s \ge 0$ and each simplex $K \in \mathcal{T}_h$, we denote by $P_s(K)$ the space of polynomials of degree at most $s$ on $K$.  On a face $e = K_1 \cap K_2 \in \mathcal{E}_h$, we denote the jump and average of a piecewise smooth scalar function $f$ by
\[
\llbracket f \rrbracket = f_1 n_1 + f_2 n_2, \quad \{f\} = \frac{f_1+f_2}{2},
\] 
where $f_i = \left. f \right|_{K_i}$, $n_1$ is the normal vector to $e$ pointing from $K_1$ to $K_2$, and similarly for $n_2$.  

We focus on dimension $d=3$ below, and we later comment about dimension $d=2$ in Remark~\ref{remark:dim2}.
In dimension $d=3$, our numerical method will make use of four approximation spaces: a space $U_h^{\dv} \subset H_0(\dv,\Omega)$ for the velocity $u$ and magnetic field $B$, a space $F_h \subset L^2(\Omega)$ for the density $\rho$, a space $Q_h \subset L^2_{\int=0}(\Omega)$ for the pressure $\widetilde{p}$, and an auxiliary space $U_h^{\curl} \subset H_0(\curl,\Omega)$.  For the velocity and magnetic field, we use the Raviart-Thomas space
\[
RT_s(\mathcal{T}_h) = \{ u \in H_0(\dv,\Omega) \mid \left. u \right|_K \in P_s(K)^3 + x P_s(K), \, \forall K \in \mathcal{T}_h \},
\]
%or the Brezzi-Douglas-Marini space
%\[
%BDM_{s+1}(\mathcal{T}_h) = \{ u \in H_0(\dv,\Omega) \mid \left. u \right|_K \in P_{s+1}(K)^d, \, \forall K \in \mathcal{T}_h \},
%\]
where $s \ge 0$ is an integer.  For the pressure, we use the zero-mean subspace of the discontinuous Galerkin space
\[
DG_s(\mathcal{T}_h) = \{ f \in L^2(\Omega) \mid \left. f \right|_K \in P_s(K), \, \forall K \in \mathcal{T}_h \}.
\]
For the density, we use $DG_m(\mathcal{T}_h)$, where $m \ge 0$ is an integer (not necessarily equal to $s$).  For the auxiliary space $U_h^{\curl}$, we use the space of Nedelec elements of the first kind,
\[
NED_s(\mathcal{T}_h) = \{ u \in H_0(\curl,\Omega) \mid \left. u \right|_K \in P_s(K)^3 + x \times P_s(K)^3, \, \forall K \in \mathcal{T}_h \}.
\]
In summary,
%\begin{align}
%U_h^{\dv} &\in \{ RT_s(\mathcal{T}_h), BDM_{s+1}(\mathcal{T}_h) \}, \label{FEvelocity} \\
%F_h &= DG_m(\mathcal{T}_h), \label{FEdensity} \\
%Q_h &= DG_s(\mathcal{T}_h) \cap L^2_{\int=0}(\Omega), \label{FEpressure} \\
%U_h^{\curl} &= NED_s(\mathcal{T}_h). \label{FEelectric}
%\end{align}
\begin{align}
U_h^{\dv} &= RT_s(\mathcal{T}_h), \label{FEvelocity} \\
F_h &= DG_m(\mathcal{T}_h), \label{FEdensity} \\
Q_h &= DG_s(\mathcal{T}_h) \cap L^2_{\int=0}(\Omega), \label{FEpressure} \\
U_h^{\curl} &= NED_s(\mathcal{T}_h). \label{FEelectric}
\end{align}

We define trilinear forms $a_h : L^2(\Omega)^3 \times L^4(\Omega)^3 \times L^4(\Omega)^3 \rightarrow \mathbb{R}$ and $b_h :  L^2(\Omega) \times L^2(\Omega) \times U_h^{\dv} \rightarrow \mathbb{R}$ by
\begin{align}
a_h(w,u,v) &= \int_\Omega w \cdot \nabla \times \pi_h^{\curl} (u \times v) \, dx, \label{ah} \\
b_h(f,g,u) &= -\sum_{K \in \mathcal{T}_h} \int_K (u \cdot \nabla \pi_h f) \pi_h g \, dx + \sum_{e \in \mathcal{E}_h} \int_e u \cdot \llbracket \pi_h f \rrbracket \{\pi_h g\} \, ds,
\end{align}
where $\pi_h^{\curl} : L^2(\Omega)^3 \rightarrow U_h^{\curl}$ and $\pi_h : L^2(\Omega) \rightarrow F_h$ denote the $L^2$-orthogonal projectors onto $U_h^{\curl}$ and $F_h$, respectively.
Note that $b_h$ (restricted to $F_h \times F_h \times U_h^{\dv}$) is a standard discontinous Galerkin discretization of the scalar advection operator~\cite{brezzi2004discontinuous}.  

These trilinear forms possess two important properties that mimic~(\ref{aalternating}-\ref{balternating}).  The trilinear form $a_h$ is alternating in its last two arguments:
\begin{equation} \label{ahalt}
a_h(w,u,v) = -a_h(w,v,u), \quad \forall (w,u,v) \in  L^2(\Omega)^3 \times L^4(\Omega)^3 \times L^4(\Omega)^3.
\end{equation}
Second, using integration by parts, one checks that $b_h$ is alternating in its first two arguments if its last argument is divergence-free:
\begin{equation} \label{bhplusminus}
b_h(f,g,u) = -b_h(g,f,u), \quad \forall (f,g,u) \in L^2(\Omega) \times L^2(\Omega) \times  (U_h^{\dv} \cap \mathring{H}(\dv,\Omega)) .
\end{equation}
Note that $a_h$ does not satisfy a discrete analogue of~(\ref{avanishes}), but there is another choice of $a_h$ which does; see Section~\ref{sec:maghelicity}.

We define our semidiscrete numerical method as follows.  We seek $u,B \in U_h^{\dv}$, $\rho \in F_h$, and $p \in Q_h$ such that
\begin{align}
\langle \partial_t (\rho u), v \rangle + a_h(\rho u, u, v) - a_h(B, B, v) + \frac{1}{2} b_h(u \cdot u, \rho, v) &= \langle p, \dv v \rangle, && \forall v \in U_h^{\dv}, \label{velocityh_cons} \\
\langle \partial_t B, C \rangle + a_h(C,B,u) &= 0, && \forall C \in U_h^{\dv}, \label{magnetich_cons} \\
\langle \partial_t \rho, \sigma \rangle + b_h(\sigma, \rho, u) &= 0, && \forall \sigma \in F_h, \label{densityh_cons} \\
\langle \dv u, q \rangle &= 0, &&\forall q \in Q_h. \label{incompressibleh_cons}
\end{align}

\begin{proposition} \label{prop:div}
The solution of~(\ref{velocityh_cons}-\ref{incompressibleh_cons}) satisfies $\dv u(t) \equiv 0$ for every $t$.  Furthermore, if $B(0)$ is exactly divergence-free, then $\dv B(t) \equiv 0$ for every $t$.
\end{proposition}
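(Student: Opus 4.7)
The plan is to handle the two statements separately, relying on two ingredients: the fact that the Raviart--Thomas space satisfies $\dv U_h^{\dv} \subset DG_s(\mathcal{T}_h)$, and the de Rham-complex inclusion $\nabla \times U_h^{\curl} \subset U_h^{\dv}$ (with $\dv \nabla \times \equiv 0$), valid here because $NED_s$ and $RT_s$ are the order-$s$ spaces in the discrete de Rham sequence with compatible homogeneous boundary conditions.

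For the first assertion, I would argue as follows. Since $u(t) \in U_h^{\dv} = RT_s(\mathcal{T}_h)$, the divergence $\dv u(t)$ lies in $DG_s(\mathcal{T}_h)$ on each element. Moreover, the boundary condition $u \cdot n = 0$ on $\partial\Omega$ built into $H_0(\dv,\Omega)$ yields $\int_\Omega \dv u \, dx = \int_{\partial\Omega} u \cdot n \, ds = 0$, so in fact $\dv u(t) \in DG_s(\mathcal{T}_h) \cap L^2_{\int=0}(\Omega) = Q_h$. Testing the incompressibility relation~\eqref{incompressibleh_cons} with this admissible choice $q = \dv u(t)$ gives $\|\dv u(t)\|_{L^2}^2 = 0$, so $\dv u(t) \equiv 0$ pointwise.

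For the second assertion, the idea is to promote the weak equation~\eqref{magnetich_cons} into a pointwise identity in $L^2(\Omega)^3$. Rewriting $a_h(C,B,u) = \langle C, \nabla \times \pi_h^{\curl}(B \times u) \rangle$, equation~\eqref{magnetich_cons} says
\begin{equation*}
\langle \partial_t B + \nabla \times \pi_h^{\curl}(B \times u), C \rangle = 0, \quad \forall C \in U_h^{\dv}.
\end{equation*}
The quantity in the first slot belongs to $U_h^{\dv}$: $\partial_t B \in U_h^{\dv}$ because the finite element space is time-independent, and $\nabla \times \pi_h^{\curl}(B \times u) \in U_h^{\dv}$ by the de Rham-complex property $\nabla \times U_h^{\curl} \subset U_h^{\dv}$. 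Taking $C$ equal to this vector itself then forces
\begin{equation*}
\partial_t B = -\nabla \times \pi_h^{\curl}(B \times u) \quad \text{in } L^2(\Omega)^3.
\end{equation*}
Applying $\dv$ and using $\dv \nabla \times \equiv 0$ yields $\partial_t (\dv B) \equiv 0$, so $\dv B(t) \equiv \dv B(0) \equiv 0$.

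The main obstacle, as always in this kind of structure-preserving argument, is identifying that the residual in the magnetic equation actually lives in the discrete test space so that one may test against it; this is precisely what the compatibility $\nabla \times U_h^{\curl} \subset U_h^{\dv}$ provides. The divergence-free property of $u$, in contrast, is essentially a consequence of the classical fact that $(U_h^{\dv}, Q_h)$ form an exact pair via the surjectivity of the divergence from $RT_s$ onto the mean-zero $DG_s$ functions.
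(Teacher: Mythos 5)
Your proof is correct and takes essentially the same route as the paper: test~\eqref{incompressibleh_cons} with $q=\dv u$ after noting $\dv u \in Q_h$, and promote~\eqref{magnetich_cons} to the pointwise identity $\partial_t B = -\nabla\times\pi_h^{\curl}(B\times u)$ via the inclusion $\nabla\times U_h^{\curl}\subseteq U_h^{\dv}$ before taking the divergence. You merely spell out two details the paper leaves implicit (the zero-mean property of $\dv u$ from the boundary condition, and the fact that the magnetic residual lies in $U_h^{\dv}$ so one may test against it).
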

\begin{proof}
Since $u \in U_h^{\dv} = RT_s(\mathcal{T}_h)$, we have $\dv u \in DG_s(\mathcal{T}_h)\cap L^2_{\int=0}(\Omega) = Q_h$, so we may take $q = \dv u$ in~(\ref{incompressibleh_cons}).  This shows that $\dv u(t) \equiv 0$ for every $t$.  Since $\nabla \times U_h^{\curl} \subseteq U_h^{\dv}$, equation~(\ref{magnetich_cons}) implies that 
\[
\partial_t B + \nabla \times \pi_h^{\curl} (B \times u) = 0
\]
holds pointwise in $\Omega$.  Taking the divergence of this equation, we see that $\dv B(t) \equiv 0$ for every $t$ if $\dv B(0) \equiv 0$.
\end{proof}

We henceforth assume $\dv B(0) \equiv 0$.

\begin{proposition} \label{prop:cons}
The numerical method~(\ref{velocityh_cons}-\ref{incompressibleh_cons}) exactly preserves $\int_\Omega \rho \, dx$, $\int_\Omega \rho^2 \, dx$, $\int_\Omega \rho u \cdot u + B \cdot B \, dx$, and (if $\rho \equiv 1$) $\int_\Omega u \cdot B \, dx$.
\end{proposition}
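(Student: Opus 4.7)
The plan is to mirror, line by line, the continuous conservation arguments presented at the end of Section~\ref{sec:weak}, substituting $a_h$ and $b_h$ for $a$ and $b$ and relying on their discrete properties (\ref{ahalt}) and (\ref{bhplusminus}) together with the pointwise divergence constraints established in Proposition~\ref{prop:div}. The four invariants correspond to four special test function choices in (\ref{velocityh_cons}-\ref{incompressibleh_cons}), exactly analogous to the continuous case; the only delicacy is that $b_h$ applies the projector $\pi_h$ to its first two arguments, so some care is needed when feeding non-polynomial scalars such as $u\cdot u$ into $b_h$.

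I would first dispatch mass and total squared density. Since constants lie in $F_h=DG_m(\mathcal{T}_h)$, I can take $\sigma=1$ in (\ref{densityh_cons}); then $\pi_h 1 = 1$, whose element-wise gradient and jumps both vanish, so $b_h(1,\rho,u)=0$ from the very definition of $b_h$, proving $\frac{d}{dt}\int_\Omega \rho\,dx=0$. Next, taking $\sigma=\rho\in F_h$ and invoking the skew-symmetry (\ref{bhplusminus}), which applies because Proposition~\ref{prop:div} guarantees $u\in U_h^{\dv}\cap\mathring{H}(\dv,\Omega)$, yields $b_h(\rho,\rho,u)=0$ and hence $\frac{d}{dt}\int_\Omega \rho^2\,dx=0$.

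Energy conservation is the main obstacle. I would set $v=u$ in (\ref{velocityh_cons}) and $C=B$ in (\ref{magnetich_cons}), use $\dv u=0$ to kill the pressure term, and use (\ref{ahalt}) to eliminate $a_h(\rho u,u,u)$ and to cancel the two remaining $a_h(B,B,u)$ contributions. What remains is to show
\[
\langle \partial_t(\rho u), u\rangle - \tfrac{1}{2}\langle \partial_t \rho, u\cdot u\rangle + \tfrac{1}{2} b_h(u\cdot u,\rho,u) = \tfrac{1}{2}\frac{d}{dt}\int_\Omega \rho u\cdot u\,dx,
\]
which reduces to proving $b_h(u\cdot u,\rho,u) = -\langle\partial_t\rho,u\cdot u\rangle$. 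The key observation is that $b_h(f,g,u)$ depends on $f$ and $g$ only through $\pi_h f$ and $\pi_h g$, so $b_h(u\cdot u,\rho,u) = b_h(\pi_h(u\cdot u),\rho,u)$. Now $\pi_h(u\cdot u)\in F_h$ is a legitimate test function, so (\ref{densityh_cons}) gives $b_h(\pi_h(u\cdot u),\rho,u) = -\langle\partial_t\rho,\pi_h(u\cdot u)\rangle$, and by self-adjointness of $\pi_h$ together with $\partial_t\rho\in F_h$ this equals $-\langle\partial_t\rho,u\cdot u\rangle$, as required.

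Finally, when $\rho\equiv 1$, cross-helicity conservation follows by taking $v=B$ in (\ref{velocityh_cons}) and $C=u$ in (\ref{magnetich_cons}) and summing. The pressure term vanishes by Proposition~\ref{prop:div}; the self-interaction $a_h(B,B,B)$ vanishes by (\ref{ahalt}); the term $b_h(u\cdot u,1,B)$ vanishes because $\pi_h 1=1$ has zero element-wise gradient and zero jumps; and the remaining pair $a_h(u,u,B)+a_h(u,B,u)$ cancels by (\ref{ahalt}). Thus $\frac{d}{dt}\int_\Omega u\cdot B\,dx=0$, which completes the proof.
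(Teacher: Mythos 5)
Your proposal is correct and follows essentially the same route as the paper, whose own proof simply observes that the continuous computations of Section~\ref{sec:weak} carry over verbatim once $a$, $b$ are replaced by $a_h$, $b_h$ satisfying (\ref{ahalt}--\ref{bhplusminus}) and $\dv u = \dv B = 0$ holds pointwise. You correctly identify and resolve the one genuinely delicate point, namely that $u \cdot u \notin F_h$: since $b_h$ sees its first argument only through $\pi_h$, one has $b_h(u\cdot u,\rho,u)=b_h(\pi_h(u\cdot u),\rho,u)$, and $\pi_h(u\cdot u)$ is an admissible test function in (\ref{densityh_cons}). One small slip in the cross-helicity step: you assert that $b_h(u\cdot u,1,B)$ vanishes ``because $\pi_h 1 = 1$ has zero element-wise gradient and zero jumps,'' but in the definition of $b_h(f,g,w)$ the gradient and the jump $\llbracket \cdot \rrbracket$ are applied to the \emph{first} argument $f=u\cdot u$, not to $g=1$, so neither term is manifestly zero. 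The correct argument (the one the paper uses in the continuous setting) is to first invoke the skew-symmetry (\ref{bhplusminus}), valid here because $B \in U_h^{\dv}\cap \mathring{H}(\dv,\Omega)$, to write $b_h(u\cdot u,1,B) = -b_h(1,u\cdot u,B)$, and only then conclude that this vanishes since the element-wise gradient and the jumps of the constant $1$ are zero. With that one-line repair the proof is complete.
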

\begin{proof}
Since $a_h$ and $b_h$ satisfy~(\ref{ahalt}-\ref{bhplusminus}), and since $u$ and $B$ are divergence-free, the proof is virtually identical to the one given in Section~\ref{sec:intro} for solutions of~(\ref{velocity}-\ref{incompressible}).
\end{proof}

Equations~(\ref{velocityh_cons}-\ref{incompressibleh_cons}) are not implementable in their present form, because they incorporate projections of the test function $v$, e.g., in the term $a_h(\rho u, u, v)$.  The following lemma rectifies this.

\begin{lemma} \label{lemma:rectify}
For every $u, v, B, C \in U_h^{\dv}$ and $\rho \in F_h$, we have
\begin{align}
a_h(\rho u, u, v) &= \langle w \times u, v \rangle, \\
a_h(B,B,v) &= \langle J \times B, v \rangle, \\
a_h(C,B,u) &= \langle \nabla \times E, C \rangle, \\
\frac{1}{2} b_h(u \cdot u, \rho, v) &= b_h(\theta,\rho,v), \label{thetarecast}
\end{align}
where $w,J,E \in U_h^{\curl}$ and $\theta \in F_h$ are defined by
\begin{align}
\langle w, z \rangle &= \langle \rho u, \nabla \times z \rangle, && \forall z \in U_h^{\curl}, \label{wdef} \\
\langle J, K \rangle &= \langle B, \nabla \times K \rangle, && \forall K \in U_h^{\curl}, \\
\langle E, F \rangle &= -\langle u \times B, F \rangle, && \forall F \in U_h^{\curl}, \label{Edef} \\
\langle \theta, \tau \rangle &= \frac{1}{2} \langle u \cdot u, \tau \rangle, && \forall \tau \in F_h. \label{thetadef}
\end{align}
\end{lemma}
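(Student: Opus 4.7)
The plan is to unwind the definitions of $a_h$ and $b_h$, and then use the defining properties of the auxiliary variables $w, J, E, \theta$ together with the self-adjointness of the $L^2$-orthogonal projectors $\pi_h^{\curl}$ and $\pi_h$.

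First I would handle the three identities involving $a_h$, which all follow the same template. For the first one, I unfold the definition of $a_h$ to get $a_h(\rho u, u, v) = \langle \rho u, \nabla \times \pi_h^{\curl}(u \times v) \rangle$. Since $\pi_h^{\curl}(u \times v) \in U_h^{\curl}$, I can take it as the test function $z$ in the defining relation~(\ref{wdef}) for $w$, which converts the right-hand side to $\langle w, \pi_h^{\curl}(u \times v) \rangle$. Because $w$ itself lies in $U_h^{\curl}$, self-adjointness of the $L^2$-orthogonal projector gives $\langle w, \pi_h^{\curl}(u \times v) \rangle = \langle w, u \times v \rangle$, and a cyclic vector-triple-product identity finishes the calculation as $\langle w \times u, v \rangle$. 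The argument for $a_h(B,B,v) = \langle J \times B, v \rangle$ is verbatim the same with $(\rho u, w)$ replaced by $(B, J)$. For $a_h(C,B,u)$, I would observe that the defining relation for $E$ says exactly that $E = \pi_h^{\curl}(B \times u)$ (using $-u \times B = B \times u$), so after unfolding $a_h$ I get $a_h(C,B,u) = \langle C, \nabla \times \pi_h^{\curl}(B \times u) \rangle = \langle C, \nabla \times E \rangle$.

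For the identity~(\ref{thetarecast}) involving $b_h$, I would note that the definition of $b_h(f,g,v)$ only depends on $f$ through $\pi_h f$. The defining relation~(\ref{thetadef}) for $\theta$ is precisely $\theta = \pi_h(\tfrac12 u \cdot u)$, so by linearity $\pi_h(u \cdot u) = 2\theta$, and since $\theta \in F_h$ we also have $\pi_h \theta = \theta$. Substituting into both sides shows that $\tfrac12 b_h(u\cdot u, \rho, v)$ and $b_h(\theta, \rho, v)$ yield identical volume and face integrals.

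I do not expect any real obstacle here, since the lemma is essentially a bookkeeping statement turning the projector $\pi_h^{\curl}$ (appearing inside $a_h$) into auxiliary variables solved for by auxiliary linear systems. The only point requiring mild care is the correct use of self-adjointness of $\pi_h^{\curl}$ to drop the inner projector once $w$, $J$, or $E$ is already known to lie in $U_h^{\curl}$, and keeping track of the sign in the vector-triple-product identity $w \cdot (u \times v) = (w \times u) \cdot v$.
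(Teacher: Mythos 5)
Your proof is correct and follows essentially the same route as the paper's: unfold $a_h$, substitute the projected test function into the defining relations for $w$, $J$, $E$, use self-adjointness of $\pi_h^{\curl}$ and the scalar triple product identity, and observe that $b_h$ sees $u\cdot u$ only through $\pi_h$. The only cosmetic difference is that you run the chains of equalities in the opposite direction and absorb the sign for $E$ into $B\times u$ rather than invoking the alternating property of $a_h$.
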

\begin{proof}
Since $w \in U_h^{\curl}$, we have
\[
\langle w \times u, v \rangle = \langle w, u \times v \rangle= \langle w, \pi_h^{\curl} (u \times v) \rangle = \langle \rho u, \nabla \times \pi_h^{\curl} (u \times v) \rangle = a_h(\rho u, u, v).
\]
Since $J \in U_h^{\curl}$, we have
\[
\langle J \times B, v \rangle = \langle J, B \times v \rangle = \langle J, \pi_h^{\curl} (B \times v) \rangle = \langle B, \nabla \times  \pi_h^{\curl} (B \times v) \rangle = a_h(B,B,v).
\]
Since~(\ref{Edef}) implies $E = -\pi_h^{\curl}(u \times B)$, we have
\[
\langle \nabla \times E, C \rangle = - \langle \nabla \times \pi_h^{\curl}(u \times B), C \rangle = -a_h(C,u,B) = a_h(C,B,u).
\]
Finally,~(\ref{thetarecast}) follows immediately from~(\ref{thetadef}), since it implies $\theta = \frac{1}{2} \pi_h (u \cdot u)$.
\end{proof} 

We can now restate the numerical method~(\ref{velocityh_cons}-\ref{incompressibleh_cons}) in an equivalent, implementable form.  It seeks $u,B \in U_h^{\dv}$, $\rho,\theta \in F_h$, $p \in Q_h$, and $w,J,E \in U_h^{\curl}$ such that
\begin{align}
\langle \partial_t (\rho u), v \rangle + \langle w \times u, v \rangle - \langle J \times B, v \rangle + b_h(\theta,\rho,v) &= \langle p, \dv v \rangle, && \forall v \in U_h^{\dv}, \label{velocityh_simp} \\
\langle \partial_t B, C \rangle + \langle \nabla \times E, C \rangle &= 0, && \forall C \in U_h^{\dv}, \\
\langle \partial_t \rho, \sigma \rangle + b_h(\sigma,\rho,u) &= 0, && \forall \sigma \in F_h, \\
\langle \dv u, q \rangle &= 0, && \forall q \in Q_h, \\
\langle w, z \rangle &= \langle \rho u, \nabla \times z \rangle, && \forall z \in U_h^{\curl}, \\
\langle J, K \rangle &= \langle B, \nabla \times K \rangle, && \forall K \in U_h^{\curl}, \\
\langle \theta, \tau \rangle &= \frac{1}{2} \langle u \cdot u, \tau \rangle, && \forall \tau \in F_h, \\
\langle E, F \rangle &= -\langle u \times B, F \rangle, && \forall F \in U_h^{\curl}. \label{Eh_simp}
\end{align}

Note that in the above scheme, the trilinear form $b_h$ is evaluated only on $F_h \times F_h \times U_h^{\dv}$, since $\rho,\theta,\sigma \in F_h$.  For these inputs, the projection $\pi_h$ does not appear:
\[
b_h(f,g,u) = -\sum_{K \in \mathcal{T}_h} \int_K (u \cdot \nabla f) g \, dx + \sum_{e \in \mathcal{E}_h} \int_e u \cdot \llbracket f \rrbracket \{ g\} \, ds, \quad \forall (f,g,u) \in F_h \times F_h \times U_h^{\dv}.
\]

\paragraph{The case of constant density.}
For the benefit of the reader, let us record what the scheme~(\ref{velocityh_simp}-\ref{Eh_simp}) reduces to when $\rho \equiv 1$.  In this setting, it seeks $u,B \in U_h^{\dv}$, $p \in Q_h$, and $w,J,E \in U_h^{\curl}$ such that
\begin{align}
\langle \partial_t u, v \rangle + \langle w \times u, v \rangle - \langle J \times B, v \rangle &= \langle p, \dv v \rangle, && \forall v \in U_h^{\dv}, \label{velocityh_rho1} \\
\langle \partial_t B, C \rangle + \langle \nabla \times E, C \rangle &= 0, && \forall C \in U_h^{\dv}, \\
\langle \dv u, q \rangle &= 0, && \forall q \in Q_h, \\
\langle w, z \rangle &= \langle u, \nabla \times z \rangle, && \forall z \in U_h^{\curl}, \\
\langle J, K \rangle &= \langle B, \nabla \times K \rangle, && \forall K \in U_h^{\curl}, \\
\langle E, F \rangle &= -\langle u \times B, F \rangle, && \forall F \in U_h^{\curl}. \label{Eh_rho1}
\end{align}

\begin{proposition}
The numerical method~(\ref{velocityh_rho1}-\ref{Eh_rho1}) exactly preserves $\int_\Omega u \cdot u + B \cdot B \, dx$ and $\int_\Omega u \cdot B \, dx$. Furthermore, $\dv u(t) \equiv 0$ and $\dv B(t) \equiv 0$ for every $t$.
\end{proposition}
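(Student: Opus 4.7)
My plan is to recognize (\ref{velocityh_rho1})--(\ref{Eh_rho1}) as the specialization of the general scheme (\ref{velocityh_simp})--(\ref{Eh_simp}) to $\rho \equiv 1$, with the auxiliary scalar $\theta$ absorbed into the pressure. Once the two systems are identified at the level of their $(u,B)$ content, the four conclusions follow immediately from Propositions \ref{prop:div} and \ref{prop:cons}: conservation of $\int_\Omega \rho \, dx$ and of $\int_\Omega \rho^2 \, dx$ become tautologies when $\rho \equiv 1$, while the energy, cross-helicity, and divergence assertions are exactly what is claimed here.

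Two small bookkeeping items are needed. First, I would verify that the density equation $\langle \partial_t \rho, \sigma \rangle + b_h(\sigma, \rho, u) = 0$ is automatically satisfied by $\rho \equiv 1$: the time-derivative term vanishes, and an element-wise integration by parts in $b_h(\sigma, 1, u)$, exploiting that $u \in U_h^{\dv}$ has continuous normal component across interior faces and vanishing normal component on $\partial\Omega$, telescopes the face integrals against the boundary terms of the element integrals to give $b_h(\sigma, 1, u) = \langle \sigma, \dv u \rangle$. This vanishes by $\dv u \equiv 0$, whose proof reproduces verbatim the first half of Proposition \ref{prop:div}.

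Second, I would absorb the $b_h(\theta, 1, v)$ term appearing in (\ref{velocityh_simp}) into the pressure. The same integration-by-parts identity gives $b_h(\theta, 1, v) = \langle \theta, \dv v \rangle$, so (\ref{velocityh_simp}) with $\rho \equiv 1$ reads $\langle \partial_t u, v \rangle + \langle w \times u, v \rangle - \langle J \times B, v \rangle = \langle p - \theta, \dv v \rangle$. Because $\dv v$ ranges over $Q_h$ as $v$ ranges over $U_h^{\dv}$, only the $L^2$-projection of $p - \theta$ onto $Q_h$ is tested; that projection is what I would designate as the pressure in (\ref{velocityh_rho1}). Conversely, any solution of (\ref{velocityh_rho1})--(\ref{Eh_rho1}) lifts to one of (\ref{velocityh_simp})--(\ref{Eh_simp}) with $\rho \equiv 1$ by defining $\theta$ through (\ref{thetadef}) and shifting the pressure back.

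With the two systems carrying the same $(u,B)$ content, Propositions \ref{prop:div} and \ref{prop:cons} deliver all four claims. The main, and still quite minor, obstacle is the pressure-absorption step: one must be comfortable that replacing $p - \theta$ by its $Q_h$-projection does not affect any $v$-test quantity in the velocity equation, which is a direct consequence of $\dv(U_h^{\dv}) \subseteq Q_h$. Everything else is a routine consequence of material already developed in Sections \ref{sec:weak} and \ref{sec:space}.
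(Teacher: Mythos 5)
Your proposal is correct and follows essentially the same route as the paper, whose entire proof is ``Apply Propositions~\ref{prop:div}--\ref{prop:cons} with $\rho \equiv 1$.'' The extra bookkeeping you supply --- checking that $\rho \equiv 1$ is consistent with the discrete density equation, and absorbing $b_h(\theta,1,v) = \langle \theta, \dv v\rangle$ into the pressure using $\dv(U_h^{\dv}) \subseteq Q_h$ --- is left implicit in the paper but is handled correctly in your argument.
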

\begin{proof}
Apply Propositions~\ref{prop:div}-\ref{prop:cons} with $\rho \equiv 1$.
\end{proof}

\section{A Variant that also Preserves Magnetic Helicity} \label{sec:maghelicity}

If, in place of~(\ref{ah}), we define
\begin{equation}
a_h(w,u,v) = \int_\Omega w \cdot \nabla \times \pi_h^{\curl} (\pi_h^{\curl}u \times \pi_h^{\curl}v) \, dx, \label{ah_comp}
\end{equation}
then we obtain a method that additionally preserves magnetic helicity $\int_\Omega A \cdot B \, dx$, where $A$ is any vector field satisfying $\nabla \times A = B$ and $\left. A \times n\right|_{\partial\Omega} = 0$.  Indeed, we then have the following discrete analogue of~(\ref{avanishes}).
\begin{lemma}
The trilinear form~(\ref{ah_comp}) satisfies
\begin{equation} \label{ahvanishes}
a_h(w,u,v) = 0\; \text{ if } \; \nabla \times w = u.
\end{equation}
\end{lemma}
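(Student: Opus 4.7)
The plan is to integrate by parts onto $w$, substitute $\nabla \times w = u$, and then exploit self-adjointness of $\pi_h^{\curl}$ together with the scalar triple product identity $a \cdot (a \times b) = 0$. The extra projections $\pi_h^{\curl}u$ and $\pi_h^{\curl}v$ that distinguish~(\ref{ah_comp}) from~(\ref{ah}) are precisely what makes the triple-product cancellation available at the discrete level.

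First, set $\phi := \pi_h^{\curl}(\pi_h^{\curl}u \times \pi_h^{\curl}v)$. By construction $\phi \in U_h^{\curl} \subset H_0(\curl,\Omega)$, so its tangential trace vanishes on $\partial\Omega$. Hence integration by parts produces no boundary term, giving
\[
a_h(w,u,v) = \int_\Omega w \cdot \nabla \times \phi \, dx = \int_\Omega (\nabla \times w) \cdot \phi \, dx = \int_\Omega u \cdot \pi_h^{\curl}(\pi_h^{\curl}u \times \pi_h^{\curl}v) \, dx,
\]
where in the last step I used the hypothesis $\nabla \times w = u$. Note that no boundary condition on $w$ itself is required, since the tangential vanishing of $\phi$ already kills the boundary integral.

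Next, since $\pi_h^{\curl}$ is the $L^2$-orthogonal projector onto $U_h^{\curl}$ and $\pi_h^{\curl}u$ lies in $U_h^{\curl}$, self-adjointness of $\pi_h^{\curl}$ lets me move the outer projection over:
\[
\int_\Omega u \cdot \pi_h^{\curl}(\pi_h^{\curl}u \times \pi_h^{\curl}v) \, dx = \int_\Omega (\pi_h^{\curl} u) \cdot (\pi_h^{\curl}u \times \pi_h^{\curl}v) \, dx.
\]
The integrand now vanishes pointwise by the identity $a \cdot (a \times b) = 0$, and the lemma follows.

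The argument is essentially mechanical; the only subtlety worth flagging is recognizing why the definition~(\ref{ah_comp}) needs to project \emph{both} $u$ and $v$ before taking their cross product. Without the inner $\pi_h^{\curl}u$, the manipulation in the second step would leave $u \cdot (\pi_h^{\curl}u \times \pi_h^{\curl}v)$ with a bare $u$, which is not pointwise orthogonal to $\pi_h^{\curl}u \times \pi_h^{\curl}v$; the triple-product cancellation would then fail. So there is no real obstacle in the proof—the work was done by choosing the right discretization of $a$.
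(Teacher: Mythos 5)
Your proof is correct and follows the same route as the paper's: integrate by parts using the vanishing tangential trace of $\pi_h^{\curl}(\pi_h^{\curl}u \times \pi_h^{\curl}v) \in H_0(\curl,\Omega)$, substitute $\nabla \times w = u$, move the outer projection onto $u$ by self-adjointness, and conclude with the triple-product identity. Your closing observation about why both inner projections are needed is a nice (and accurate) bit of extra context, but the argument itself matches the paper's step for step.
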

\begin{proof}
If $\nabla \times w = u$, then we can integrate~(\ref{ah_comp}) by parts and use the fact that $\left.n \times \pi_h^{\curl} (\pi_h^{\curl}u \times \pi_h^{\curl}v)\right|_{\partial\Omega} = 0$ to obtain
\begin{align*}
a_h(w,u,v)
&= \langle w, \nabla \times \pi_h^{\curl} (\pi_h^{\curl}u \times \pi_h^{\curl}v) \rangle & \\
&= \langle \nabla \times w,  \pi_h^{\curl} (\pi_h^{\curl}u \times \pi_h^{\curl}v) \rangle &  \\
&= \langle u,  \pi_h^{\curl} (\pi_h^{\curl}u \times \pi_h^{\curl}v) \rangle & \\
&= \langle \pi_h^{\curl} u,  \pi_h^{\curl}u \times \pi_h^{\curl}v \rangle & \\
&= 0. &
\end{align*}
\end{proof}
The above property of $a_h$ implies that if $A$ is any vector field satisfying $\nabla \times A = B$ and $\left. A \times n\right|_{\partial\Omega} = 0$, then
\begin{align}
\frac{d}{dt} \langle A, B \rangle 
&= \langle \partial_t A, B \rangle + \langle A, \partial_t B \rangle \nonumber \\
&= \langle \partial_t A,  \nabla \times A \rangle + \langle A, \partial_t B \rangle \nonumber \\
&= \langle \nabla \times \partial_t A, A \rangle + \langle A, \partial_t B \rangle \nonumber \\
&= 2\langle \partial_t B, A \rangle \nonumber \\
&= 2\langle \partial_t B, \pi_h^{\dv} A \rangle \nonumber \\
&= -2a_h(\pi_h^{\dv} A, B, u) \nonumber \\
&= -2a_h(A, B, u) \nonumber \\
&= 0. \label{ddtAB}
\end{align}
Above, we used the magnetic field equation~(\ref{magnetich_cons}) with $C=\pi_h^{\dv}A$, and we used with the fact that $a_h(\pi_h^{\dv} A, B, u) = \langle \pi_h^{\dv} A, \nabla \times \pi_h^{\curl} (\pi_h^{\curl}B \times \pi_h^{\curl}u) \rangle = a_h(A, B, u)$ since $\nabla \times U_h^{\curl} \subseteq U_h^{\dv}$. 

Using calculations analogous to those in Lemma~\ref{lemma:rectify}, one finds that when $a_h$ is given by~(\ref{ah_comp}), the method~(\ref{velocityh_cons}-\ref{incompressibleh_cons}) is equivalent to the following method:  Seek $u,B \in U_h^{\dv}$, $\rho,\theta \in F_h$, $p \in Q_h$, and $w,J,H,U,E,\alpha \in U_h^{\curl}$ such that
\begin{align}
\langle \partial_t (\rho u), v \rangle + \langle \alpha, v \rangle + b_h(\theta,\rho,v) &= \langle p, \dv v \rangle, && \forall v \in U_h^{\dv}, \label{velocityh_comp} \\
\langle \partial_t B, C \rangle + \langle \nabla \times E, C \rangle &= 0, && \forall C \in U_h^{\dv}, \label{magnetich_comp} \\
\langle \partial_t \rho, \sigma \rangle + b_h(\sigma,\rho,u) &= 0, && \forall \sigma \in F_h, \label{densityh_comp} \\
\langle \dv u, q \rangle &= 0, && \forall q \in Q_h,  \\
\langle w, z \rangle &= \langle \rho u, \nabla \times z \rangle, && \forall z \in U_h^{\curl}, \\
\langle J, K \rangle &= \langle B, \nabla \times K \rangle, && \forall K \in U_h^{\curl}, \\
\langle \theta, \tau \rangle &= \frac{1}{2} \langle u \cdot u, \tau \rangle, && \forall \tau \in F_h, \label{thetah_comp} \\
\langle H, G \rangle &= \langle B, G \rangle, && \forall G \in U_h^{\curl}, \label{Bproj_comp} \\
\langle U, V \rangle &= \langle u, V \rangle, && \forall V \in U_h^{\curl}, \label{uproj_comp}  \\
\langle E, F \rangle &= -\langle U \times H, F \rangle, && \forall F \in U_h^{\curl}, \label{Eh_comp} \\
\langle \alpha, \beta \rangle &= \langle w \times U - J \times H, \beta \rangle, && \forall \beta \in U_h^{\curl}. \label{alphah_comp}
\end{align}
Note that in comparison with~(\ref{velocityh_simp}-\ref{Eh_simp}), more steps are needed here to remove the projection of the test function $v$ because of the additional projection $\pi ^{\curl} _h$ appearing in~(\ref{ah_comp}).
\begin{proposition} \label{prop:comp}
The numerical method~(\ref{velocityh_comp}-\ref{alphah_comp}) exactly preserves $\int_\Omega \rho \, dx$, $\int_\Omega \rho^2 \, dx$, $\int_\Omega \rho u \cdot u + B \cdot B \, dx$, $\int_\Omega A \cdot B \, dx$, and (if $\rho \equiv 1$) $\int_\Omega u \cdot B \, dx$. 
Furthermore, $\dv u(t) \equiv 0$ and $\dv B(t) \equiv 0$ for every $t$.
\end{proposition}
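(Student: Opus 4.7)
The plan is to recognize that the implementable system (\ref{velocityh_comp}-\ref{alphah_comp}) is equivalent, by calculations analogous to Lemma~\ref{lemma:rectify}, to the abstract system (\ref{velocityh_cons}-\ref{incompressibleh_cons}) with $a_h$ now defined by (\ref{ah_comp}) in place of (\ref{ah}). Once this equivalence is established, I would verify that the two structural properties of $a_h$ that drove the proof of Proposition~\ref{prop:cons}, namely (\ref{ahalt}), still hold for the new $a_h$: this is immediate since $\pi_h^{\curl}u \times \pi_h^{\curl}v = -\pi_h^{\curl}v \times \pi_h^{\curl}u$, so the outer structure of~(\ref{ah_comp}) preserves the alternating property. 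The form $b_h$ is unchanged, so (\ref{bhplusminus}) still holds.

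For the divergence constraints, the argument of Proposition~\ref{prop:div} carries over verbatim: the incompressibility equation and the choice of $U_h^{\dv} = RT_s(\mathcal{T}_h)$ together force $\dv u \equiv 0$ pointwise, while~(\ref{magnetich_comp}) combined with $\nabla \times U_h^{\curl} \subseteq U_h^{\dv}$ yields the pointwise identity $\partial_t B + \nabla \times E = 0$, so taking the divergence propagates $\dv B(0) = 0$ forward in time. Conservation of $\int_\Omega \rho\,dx$ and $\int_\Omega \rho^2\,dx$ follows from the density equation and (\ref{bhplusminus}) exactly as before (taking $\sigma = 1$ and $\sigma = \rho$).

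Conservation of energy is obtained by testing (\ref{velocityh_comp}) with $v = u$, (\ref{magnetich_comp}) with $C = B$, and (\ref{densityh_comp}) with $\sigma = \tfrac{1}{2}u\cdot u$ (which lies in $F_h$ by the definition of $\theta$ and the projection onto $F_h$), then combining and using the alternating property (\ref{ahalt}) of the new $a_h$, the alternating property (\ref{bhplusminus}) of $b_h$, and $\dv u = 0$. When $\rho \equiv 1$, cross-helicity conservation is obtained by testing (\ref{velocityh_comp}) with $v = B$ and (\ref{magnetich_comp}) with $C = u$, again relying only on (\ref{ahalt}), (\ref{bhplusminus}), $\dv B = 0$, and the identity $b_h(u \cdot u, 1, B) = -b_h(1, u \cdot u, B) = 0$.

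The only genuinely new ingredient is magnetic helicity, and that computation has essentially already been laid out in (\ref{ddtAB}): given any $A$ with $\nabla \times A = B$ and $A \times n|_{\partial\Omega} = 0$, one differentiates $\langle A, B \rangle$, uses the symmetry $\langle \partial_t A, \nabla \times A \rangle = \langle \nabla \times \partial_t A, A \rangle$ together with $\nabla \times \partial_t A = \partial_t B$ to collapse to $2\langle \partial_t B, A \rangle$, replaces $A$ by $\pi_h^{\dv} A$ (justified because $\partial_t B \in U_h^{\dv}$), uses (\ref{magnetich_cons}) with $C = \pi_h^{\dv}A$, and then invokes (\ref{ahvanishes}) together with $\nabla \times U_h^{\curl} \subseteq U_h^{\dv}$ to conclude. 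The main obstacle is the bookkeeping in that last step, specifically justifying $a_h(\pi_h^{\dv}A, B, u) = a_h(A, B, u)$, which requires noting that only the $U_h^{\dv}$-component of the first argument of the new $a_h$ contributes, since $\nabla \times \pi_h^{\curl}(\pi_h^{\curl}B \times \pi_h^{\curl}u) \in U_h^{\dv}$.
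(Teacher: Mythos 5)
Your proposal is correct and follows essentially the same route as the paper: reduce the implementable scheme to the abstract form (\ref{velocityh_cons}--\ref{incompressibleh_cons}) with $a_h$ given by (\ref{ah_comp}), reuse the properties (\ref{ahalt})--(\ref{bhplusminus}) and the argument of Proposition~\ref{prop:div}, and invoke the already-established computation (\ref{ddtAB}) for magnetic helicity. The only nit is that the energy-conservation test function in the density equation should be written as $\sigma=\theta=\tfrac{1}{2}\pi_h(u\cdot u)\in F_h$ rather than $\tfrac{1}{2}u\cdot u$ itself, a point you already acknowledge via the projection.
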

\begin{proof}
We already showed that $\int_\Omega A \cdot B \, dx$ is preserved.  The proof that $\dv u \equiv 0$ and $\dv B \equiv 0$ is essentially the same as the proof of Proposition~\ref{prop:div}.  The other conservation laws are consequences of the properties~(\ref{ahalt}-\ref{bhplusminus}) of $a_h$ and $b_h$.  
\end{proof}

\paragraph{The case of constant density.}
For the benefit of the reader, let us record what the scheme~(\ref{velocityh_comp}-\ref{alphah_comp}) reduces to when $\rho \equiv 1$.  In this setting, it seeks $u,B \in U_h^{\dv}$, $p \in Q_h$, and $w,J,H,U,E,\alpha \in U_h^{\curl}$ such that
\begin{align}
\langle \partial_t u, v \rangle + \langle \alpha, v \rangle  &= \langle p, \dv v \rangle, && \forall v \in U_h^{\dv}, \label{velocityh_comp_rho1} \\
\langle \partial_t B, C \rangle + \langle \nabla \times E, C \rangle &= 0, && \forall C \in U_h^{\dv}, \\
\langle \dv u, q \rangle &= 0, && \forall q \in Q_h, \\
\langle w, z \rangle &= \langle u, \nabla \times z \rangle, && \forall z \in U_h^{\curl}, \\
\langle J, K \rangle &= \langle B, \nabla \times K \rangle, && \forall K \in U_h^{\curl}, \\
\langle H, G \rangle &= \langle B, G \rangle, && \forall G \in U_h^{\curl}, \\
\langle U, V \rangle &= \langle u, V \rangle, && \forall V \in U_h^{\curl}, \\
\langle E, F \rangle &= -\langle U \times H, F \rangle, && \forall F \in U_h^{\curl}, \label{Eh_comp_rho1} \\
\langle \alpha, \beta \rangle &= \langle w \times U - J \times H, \beta \rangle, && \forall \beta \in U_h^{\curl}. \label{alphah_comp_rho1}
\end{align}

\begin{proposition}
The numerical method~(\ref{velocityh_comp_rho1}-\ref{alphah_comp_rho1}) exactly preserves $\int_\Omega u \cdot u + B \cdot B \, dx$, $\int_\Omega A \cdot B \, dx$, and $\int_\Omega u \cdot B \, dx$.  Furthermore, $\dv u(t) \equiv 0$ and $\dv B(t) \equiv 0$ for every $t$.
\end{proposition}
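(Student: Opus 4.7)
My plan is to derive this proposition as an immediate corollary of Proposition~\ref{prop:comp}, by recognizing that the constant-density scheme (\ref{velocityh_comp_rho1}-\ref{alphah_comp_rho1}) is precisely the specialization of (\ref{velocityh_comp}-\ref{alphah_comp}) to $\rho \equiv 1$. The only points requiring any verification are what becomes of the density equation (\ref{densityh_comp}) and of the term $b_h(\theta,\rho,v)$ in the momentum equation (\ref{velocityh_comp}). For both, I would integrate by parts: using $u \cdot n|_{\partial\Omega} = 0$ and the pointwise identity $\dv u \equiv 0$ (which follows from the incompressibility equation exactly as in Proposition~\ref{prop:div}), one finds $b_h(\sigma,1,u) = \langle \dv u, \sigma \rangle = 0$ for every $\sigma \in F_h$, so (\ref{densityh_comp}) at $\rho \equiv 1$ is automatic. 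Similarly, $b_h(\theta,1,v) = \langle \theta, \dv v \rangle$, and since $\dv v \in Q_h$ whenever $v \in U_h^{\dv}$, this term can be absorbed into the pressure by replacing $p$ with $p - \pi \theta \in Q_h$, where $\pi$ denotes the $L^2$-orthogonal projection onto $Q_h$. This accounts for the absence of $\theta$ and $\rho$ from (\ref{velocityh_comp_rho1}-\ref{alphah_comp_rho1}).

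With the two schemes identified, every conservation statement asserted here is inherited from Proposition~\ref{prop:comp}. Specifically, the general energy $\int_\Omega \rho u \cdot u + B \cdot B \, dx$ reduces to $\int_\Omega u \cdot u + B \cdot B \, dx$; the conservation of $\int_\Omega \rho \, dx$ and $\int_\Omega \rho^2 \, dx$ becomes vacuous; magnetic helicity $\int_\Omega A \cdot B \, dx$ is preserved by the same argument that produced (\ref{ddtAB}); cross-helicity $\int_\Omega u \cdot B \, dx$ is preserved precisely because $\rho \equiv 1$ is the hypothesis under which Proposition~\ref{prop:comp} guarantees cross-helicity conservation; and the pointwise identities $\dv u(t) \equiv 0$ and $\dv B(t) \equiv 0$ transfer without change.

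There is no substantive obstacle in this proof. The entire content is the bookkeeping that identifies (\ref{velocityh_comp_rho1}-\ref{alphah_comp_rho1}) with the $\rho \equiv 1$ case of (\ref{velocityh_comp}-\ref{alphah_comp}), after which Proposition~\ref{prop:comp} supplies everything claimed.
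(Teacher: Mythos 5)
Your proof is correct and takes exactly the paper's route: the paper's own proof is the one-liner ``Apply Proposition~\ref{prop:comp} with $\rho \equiv 1$.'' Your additional bookkeeping---checking that $b_h(\sigma,1,u)=\langle \dv u, \pi_h\sigma\rangle = 0$ so the density stays constant, and that $b_h(\theta,1,v)=\langle\theta,\dv v\rangle$ is absorbed into the pressure---is sound and simply makes explicit the reduction the paper leaves implicit.
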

\begin{proof}
Apply Proposition~\ref{prop:comp} with $\rho \equiv 1$.
\end{proof}

\begin{remark}
For most of the remainder of this paper, we will focus our attention on the scheme~(\ref{velocityh_comp}-\ref{alphah_comp}).  Results and techniques that we develop for~(\ref{velocityh_comp}-\ref{alphah_comp}) carry over easily to~(\ref{velocityh_comp_rho1}-\ref{alphah_comp_rho1}) by setting $\rho \equiv 1$.  The same results and techniques (with the exception of magnetic helicity conservation when $d=3$) carry over easily to (\ref{velocityh_simp}-\ref{Eh_simp}) and~(\ref{velocityh_rho1}-\ref{Eh_rho1}) as well.  In fact, we recover the scheme~(\ref{velocityh_simp}-\ref{Eh_simp}) (respectively,~(\ref{velocityh_rho1}-\ref{Eh_rho1})) from Section~\ref{sec:space} by replacing~(\ref{Eh_comp}-\ref{alphah_comp}) (respectively,~(\ref{Eh_comp_rho1}-\ref{alphah_comp_rho1})) by
\begin{align*}
\langle E, F \rangle &= -\langle u \times B, F \rangle, \quad \forall F \in U_h^{\curl}, \\
\alpha &= w \times u - J \times B.
\end{align*}
\end{remark}

\begin{remark} \label{remark:dim2}
Specializing the above schemes to dimension $d=2$ is straightforward, but one must take care to distinguish between vector fields in the plane ($u,B,H,U$, and $\alpha$) and vector fields orthogonal to it ($w,J$, and $E$).  Accordingly, we identify $w,J$, and $E$ with scalar fields and discretize them with the continuous Galerkin finite element space
\begin{equation} \label{cg}
CG_s(\mathcal{T}_h) = \{ f \in C^0(\overline{\Omega}) \mid \left.f\right|_K \in P_s(K), \, \forall K \in \mathcal{T}_h, \, f=0 \text{ on } \partial\Omega \} \subset H^1_0(\Omega)
\end{equation}
when $d=2$.  We do the same for the test vector fields $z,K$, and $F$.
\end{remark}

\section{Upwinding} \label{sec:upwinding}

To incorporate upwinding into the density advection equation~(\ref{densityh_comp}), one can replace~(\ref{densityh_comp}) by
\begin{align}
\langle \partial_t \rho, \sigma \rangle + b_h(\sigma,\rho,u) + \sum_{e \in \mathcal{E}_h} \int_e \beta_e(u) \llbracket \sigma \rrbracket \cdot \llbracket \rho \rrbracket \, ds  &= 0, && \forall \sigma \in F_h, \label{densityh_comp_upwind0}
\end{align}
where $\{\beta_e\}_{e \in \mathcal{E}_h}$ are nonnegative parameters which may depend on $u$.  A standard choice for $\beta_e$ is~\cite{brezzi2004discontinuous}
\[
\beta_e(u) = c |u \cdot n|,
\]
where $c \in [0,\frac{1}{2}]$, although we have found that the smooth approximation
\[
\beta_e(u) = \frac{2c}{\pi} (u \cdot n) \arctan\left( \frac{u \cdot n}{\varepsilon} \right)
\]
with $\varepsilon>0$ small (e.g. $\varepsilon=0.01$) tends to give better numerical performance in our experiments.
Full upwinding corresponds to the choice $c=\frac{1}{2}$~\cite{brezzi2004discontinuous}.  When $c>0$, this modification of the density advection equation interferes with conservation of $\int_\Omega \rho^2 \, dx$ and $\int_\Omega \rho u \cdot u + B \cdot B \, dx$, but not $\int_\Omega \rho \, dx$ since $\sum_{e \in \mathcal{E}_h} \int_e \beta_e(u) \llbracket 1 \rrbracket \cdot \llbracket \rho \rrbracket \, ds = 0$.  However, there is a simple way to restore energy conservation.  As suggested in~\cite{gawlik2020conservative}, one replaces the momentum equation~(\ref{velocityh_comp}) by
\begin{equation} \label{velocityh_comp_upwind0} 
\langle \partial_t (\rho u), v \rangle + \langle \alpha, v \rangle + b_h(\theta,\rho,v) + \sum_{e \in \mathcal{E}_h} \int_e \beta_e(u) \left( \frac{v \cdot n}{u \cdot n} \right) \llbracket \theta \rrbracket \cdot \llbracket \rho\rrbracket  \, ds = \langle p, \dv v \rangle, \quad \forall v \in U_h^{\dv}. 
\end{equation}
Both~(\ref{densityh_comp_upwind0}) and~(\ref{velocityh_comp_upwind0}) can be written more compactly if we introduce the $u$-dependent trilinear form
\begin{equation} \label{bhupwind}
\widetilde{b}_h(u; f,g,v) = b_h(f,g,v) + \sum_{e \in \mathcal{E}_h} \int_e \beta_e(u) \left( \frac{v \cdot n}{u \cdot n} \right) \llbracket \pi_h f \rrbracket \cdot \llbracket \pi_h g \rrbracket  \, ds.
\end{equation}
In terms of $\widetilde{b}_h$,~(\ref{densityh_comp_upwind0}) and~(\ref{velocityh_comp_upwind0}) read
\begin{align}
\langle \partial_t \rho, \sigma \rangle + \widetilde{b}_h(u; \sigma,\rho,u)  &= 0, && \forall \sigma \in F_h, \label{densityh_comp_upwind} \\
\langle \partial_t (\rho u), v \rangle + \langle \alpha, v \rangle + \widetilde{b}_h(u; \theta,\rho,v) &= \langle p, \dv v \rangle, && \forall v \in U_h^{\dv}. \label{velocityh_comp_upwind}
\end{align}
\begin{proposition}
With the exception of $\int_\Omega \rho^2 \, dx$, all of the invariants listed in Proposition~\ref{prop:comp} are preserved by~(\ref{velocityh_comp}-\ref{alphah_comp}) if one replaces~(\ref{densityh_comp}) and~(\ref{velocityh_comp}) by~(\ref{densityh_comp_upwind}) and~(\ref{velocityh_comp_upwind}), respectively.
\end{proposition}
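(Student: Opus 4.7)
The plan is to trace through each conservation argument from Proposition~\ref{prop:comp} and identify which ones are unaffected by the upwinding modifications and which acquire new boundary contributions. Only the density and momentum equations change, and only through edge integrals weighted by $\beta_e(u)$. The magnetic equation, the incompressibility equation, and the auxiliary equations defining $w, J, H, U, E, \alpha$ are all untouched, so $\dv u(t) \equiv 0$ and $\dv B(t) \equiv 0$ follow verbatim from the proofs of Propositions~\ref{prop:div} and~\ref{prop:comp}. Magnetic helicity conservation likewise carries over, since its derivation in~(\ref{ddtAB}) only invokes the magnetic equation~(\ref{magnetich_comp}) and the property~(\ref{ahvanishes}). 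Mass conservation is handled by setting $\sigma = 1 \in F_h$ in the upwinded equation~(\ref{densityh_comp_upwind}); the added edge sum contains $\llbracket 1 \rrbracket = 0$ and drops out.

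The main content is energy conservation. Mirroring the original argument, I would test the upwinded momentum equation~(\ref{velocityh_comp_upwind}) with $v = u$, the upwinded density equation~(\ref{densityh_comp_upwind}) with $\sigma = \theta$, and the magnetic equation~(\ref{magnetich_comp}) with $C = B$, then form the combination $\langle \partial_t(\rho u), u \rangle - \frac{1}{2}\langle \partial_t \rho, u \cdot u \rangle + \langle \partial_t B, B \rangle$. Testing with $v = u$ annihilates the pressure term via $\dv u = 0$, and the relation $\langle \theta, \tau \rangle = \frac{1}{2}\langle u\cdot u, \tau \rangle$ for $\tau \in F_h$ converts $\frac{1}{2}\langle \partial_t \rho, u \cdot u \rangle$ into $\langle \partial_t \rho, \theta \rangle$. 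The key observation, and the whole reason for the factor $(v \cdot n)/(u \cdot n)$ in~(\ref{velocityh_comp_upwind0}), is that this ratio reduces to $1$ when $v = u$, so the new upwinding edge sum introduced in the momentum equation coincides exactly with the one introduced in the density equation after setting $\sigma = \theta$. These two contributions enter with opposite signs in the energy identity and cancel. The surviving bulk terms $-\langle \alpha, u \rangle$ and $-\langle \nabla \times E, B \rangle$ then cancel precisely as in the unmodified proof, via the defining relations for $\alpha, E, J, U, H$ together with the identity $(w \times U) \cdot U = 0$.

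Cross-helicity in the case $\rho \equiv 1$ is immediate because $\llbracket \rho \rrbracket = 0$, so both upwinding corrections vanish identically and the argument of Proposition~\ref{prop:comp} applies unchanged. I expect the only potentially confusing step is bookkeeping the signs when combining the three tested equations into the energy identity; the essential idea is simply that the weight $(v \cdot n)/(u \cdot n)$ is engineered so that the momentum-equation edge term matches the density-equation edge term when $v=u$ and $\sigma = \theta$. It is worth noting, for completeness, that the same mechanism does not rescue $\int_\Omega \rho^2\, dx$: testing the density equation with $\sigma = \rho$ produces $\sum_{e} \int_e \beta_e(u) |\llbracket \rho \rrbracket|^2 \, ds \ge 0$, an intrinsically dissipative quantity with no counterpart to cancel it, which is the reason this invariant must be dropped from the list.
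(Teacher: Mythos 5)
Your proposal is correct and follows essentially the same route as the paper: the paper likewise reduces everything to energy conservation and observes that $\widetilde{b}_h(u;\sigma,\rho,u)=\widetilde{b}_h(u;\theta,\rho,v)$ when $\sigma=\theta$ and $v=u$, which is exactly your point that the weight $(v\cdot n)/(u\cdot n)$ makes the momentum-equation edge term match the density-equation edge term so the two cancel in the energy identity. Your additional remarks on mass conservation ($\llbracket 1\rrbracket=0$), cross-helicity ($\llbracket\rho\rrbracket=0$ when $\rho\equiv 1$), and the dissipative sign of the $\int_\Omega\rho^2\,dx$ defect all agree with the paper's surrounding discussion.
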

\begin{proof}
The only nontrivial claim to check is energy conservation.  For this, we simply recall that energy conservation is deduced by taking $v=u$ in~(\ref{velocityh_comp}), $C=B$ in~(\ref{magnetich_comp}), and $\sigma=\theta$ in~(\ref{densityh_comp}).  Since 
\[
\widetilde{b}_h(u; \sigma,\rho,u) = \widetilde{b}_h(u; \theta,\rho,v), \quad \text{ if } \sigma=\theta \text{ and } v=u,
\]
the proof of energy conservation carries over to this setting. 
\end{proof}

\section{Temporal Discretization} \label{sec:temp}

We now describe a temporal discretization of (the upwinded version of)~(\ref{velocityh_comp}-\ref{alphah_comp}) that exactly preserves all of the original invariants of (the upwinded version of)~(\ref{velocityh_comp}-\ref{alphah_comp}).

We use a time step $\Delta t>0$, and we write $u_k$ to denote the value of the discrete solution $u$ at time $t_k = k\Delta t$.  We denote $u_{k+1/2} = (u_k+u_{k+1})/2$, with similar notation for $p$, $B$, and $\rho$.  We also denote
\[
(\rho u)_{k+1/2} = \frac{\rho_k u_k  + \rho_{k+1} u_{k+1}}{2}.
\]

When stepping from time $t_k$ to time $t_{k+1}$, we know the values of $u_k$, $p_k$, $B_k$, and $\rho_k$, and we seek to determine $u_{k+1}$, $p_{k+1}$, $B_{k+1}$, and $\rho_{k+1}$.  The auxiliary variables $w$, $J$, $\theta$, $H$, $U$, $E$, and $\alpha$ play a role in this calculation, but we do not index them with the subscript $k$.  Our time discretization reads
\begin{align}
\left\langle \frac{ \rho_{k+1} u_{k+1}-\rho_k u_k }{ \Delta t }, v \right\rangle + \langle \alpha, v \rangle + \widetilde{b}_h( u_{k+1/2}; \theta, \rho_{k+1/2}, v )  - \langle p_{k+1}, \dv v \rangle &= 0, && \forall v \in U_h^{\dv}, \label{velocitydt} \\
\left\langle \frac{ B_{k+1}-B_k }{ \Delta t }, C \right\rangle + \langle \nabla \times E, C \rangle &= 0 && \forall C \in U_h^{\dv}, \label{magneticdt} \\
\left\langle \frac{\rho_{k+1}-\rho_k}{\Delta t}, \sigma \right\rangle + \widetilde{b}_h( u_{k+1/2}, \sigma, \rho_{k+1/2}, u_{k+1/2} ) &= 0, && \forall \sigma \in F_h, \label{densitydt} \\
\langle \dv u_{k+1}, q \rangle &= 0, && \forall q \in Q_h, \label{incompressibledt}
\end{align}
where $\theta$, $E$, and $\alpha$ (as well as $w$, $J$, $H$, and $U$) are determined from the equations
\begin{align}
\langle w, z \rangle &= \langle (\rho u)_{k+1/2}, \nabla \times z \rangle, && \forall z \in U_h^{\curl}, \label{wdt} \\
\langle J, K \rangle &= \langle B_{k+1/2}, \nabla \times K \rangle, && \forall K \in U_h^{\curl}, \\
\langle \theta, \tau \rangle &= \frac{1}{2} \langle u_k \cdot u_{k+1}, \tau \rangle, && \forall \tau \in F_h, \label{thetadt} \\
\langle H, G \rangle &= \langle B_{k+1/2}, G \rangle, && \forall G \in U_h^{\curl}, \\
\langle U, V \rangle &= \langle u_{k+1/2}, V \rangle, && \forall V \in U_h^{\curl}, \\
\langle E, F \rangle &= -\langle U \times H, F \rangle, && \forall F \in U_h^{\curl}, \\
\langle \alpha, \beta \rangle &= \langle w \times U - J \times H, \beta \rangle, && \forall \beta \in U_h^{\curl}.  \label{alphadt}
\end{align}

Notice that the midpoint rule has been adopted in all equations above except~(\ref{thetadt}), where $u \cdot u$ is discretized as $u_k \cdot u_{k+1}$.  We do this in order to take advantage of the identity
\begin{equation} \label{rhousquared}
\begin{split}
&\frac{1}{2\Delta t} \int_\Omega \left( \rho_{k+1} u_{k+1} \cdot u_{k+1} - \rho_k u_k \cdot u_k \right) \, dx \\
&= \left\langle \frac{\rho_{k+1} u_{k+1} - \rho_k u_k}{ \Delta t }, \frac{u_k+u_{k+1}}{2} \right\rangle - \frac{1}{2} \left\langle \frac{\rho_{k+1}-\rho_k}{\Delta t}, u_k \cdot u_{k+1} \right\rangle
\end{split}
\end{equation}
when proving energy conservation below.

\begin{proposition}
If $\dv B_0 \equiv 0$, then the solution of~(\ref{velocitydt}-\ref{alphadt}) satisfies
\begin{align}
\int_\Omega \rho_{k+1} \, dx &= \int_\Omega \rho_k \, dx \label{massdt} \\
\int_\Omega \rho_{k+1}^2 \, dx &\le \int_\Omega \rho_k^2 \, dx, \text{ with equality if } \beta_e = 0, \, \forall e \in \mathcal{E}_h, \label{rho2intdt} \\
\int_\Omega \rho_{k+1} u_{k+1} \cdot u_{k+1} + B_{k+1} \cdot B_{k+1} \, dx &= \int_\Omega \rho_k u_k \cdot u_k + B_k \cdot B_k \, dx,  \label{energydt} \\
\int_\Omega u_{k+1} \cdot B_{k+1} \, dx &= \int_\Omega u_k \cdot B_k \, dx, \quad \text{ if } \rho_0 \equiv 1, \\
\int_\Omega A_{k+1} \cdot B_{k+1} \, dx &= \int_\Omega A_k \cdot B_k \, dx, \\
\dv u_k &\equiv 0, \\
\dv B_k &\equiv 0
\end{align}
for every $k$.  Here, $A_k$ denotes any vector field satisfying $\nabla \times A_k = B_k$ and $\left. A_k \times n \right|_{\partial\Omega} = 0$.
\end{proposition}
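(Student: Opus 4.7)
The plan is to mirror the continuous-time arguments of Section~\ref{sec:weak} and Proposition~\ref{prop:comp}, systematically replacing $\partial_t$ by the divided difference $(\cdot_{k+1}-\cdot_k)/\Delta t$ and $u$, $B$, $\rho$ by their midpoint averages, while handling the deliberately non-midpoint definition~(\ref{thetadt}) of $\theta$ via the identity~(\ref{rhousquared}). The lower-hanging invariants come first. For $\dv u_k \equiv 0$ I would take $q = \dv u_{k+1}$ in~(\ref{incompressibledt}); this lies in $Q_h$ because $\int_\Omega \dv u_{k+1}\,dx = \int_{\partial\Omega} u_{k+1}\cdot n\,ds = 0$. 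For $\dv B_k \equiv 0$, I would observe that~(\ref{magneticdt}) together with $\nabla \times U_h^{\curl} \subseteq U_h^{\dv}$ gives $(B_{k+1}-B_k)/\Delta t = -\nabla \times E$ pointwise, so taking the divergence propagates $\dv B = 0$ inductively.

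Mass conservation~(\ref{massdt}) follows from $\sigma = 1$ in~(\ref{densitydt}), with the full $\widetilde{b}_h$ term vanishing since $\nabla 1 = 0$ and $\llbracket 1\rrbracket = 0$. Inequality~(\ref{rho2intdt}) comes from $\sigma = \rho_{k+1/2}$: the divided-difference term telescopes via $\langle (\rho_{k+1}-\rho_k)/\Delta t, \rho_{k+1/2}\rangle = (2\Delta t)^{-1}\int_\Omega(\rho_{k+1}^2 - \rho_k^2)\,dx$, the $b_h$ part vanishes by skew-symmetry~(\ref{bhplusminus}) (whose divergence-free hypothesis is supplied by $\dv u_{k+1/2}=0$), and the residual upwind term $\sum_e \int_e \beta_e(u_{k+1/2}) \llbracket \rho_{k+1/2}\rrbracket \cdot \llbracket \rho_{k+1/2}\rrbracket\,ds \ge 0$ yields the inequality (with equality precisely when all $\beta_e$ vanish).

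The principal step, and the place where~(\ref{thetadt}) and~(\ref{alphadt}) earn their keep, is~(\ref{energydt}). I would test~(\ref{velocitydt}) with $v = u_{k+1/2}$, (\ref{magneticdt}) with $C = B_{k+1/2}$, and~(\ref{densitydt}) with $\sigma = \theta$. Since $\alpha \in U_h^{\curl}$ and $U$ is the $L^2$-projection of $u_{k+1/2}$ onto $U_h^{\curl}$, we have $\langle \alpha, u_{k+1/2}\rangle = \langle \alpha, U\rangle = -\langle J\times H, U\rangle$ from~(\ref{alphadt}) (the $w \times U$ piece vanishes pointwise). A parallel manipulation using the defining equations of $J$ and $E$ shows $\langle \nabla \times E, B_{k+1/2}\rangle = \langle J, E\rangle = \langle J \times H, U\rangle$, so the two cross-coupling contributions cancel when the velocity and magnetic equations are added. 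The surviving $\widetilde{b}_h$ is killed by the density equation at $\sigma = \theta$, whose right-hand side, via~(\ref{thetadt}) with $\tau = (\rho_{k+1}-\rho_k)/\Delta t \in F_h$, is exactly $\tfrac{1}{2}\langle u_k \cdot u_{k+1}, (\rho_{k+1}-\rho_k)/\Delta t\rangle$. Identity~(\ref{rhousquared}) then packages the kinetic terms into $(2\Delta t)^{-1}\int_\Omega(\rho_{k+1} u_{k+1}\cdot u_{k+1} - \rho_k u_k \cdot u_k)\,dx$, producing~(\ref{energydt}). The hard part here is the bookkeeping of the $U_h^{\curl}$ projections so that the cancellations become visible; everything else is a mechanical transcription.

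For cross-helicity when $\rho \equiv 1$ (a property preserved throughout because $\widetilde{b}_h(u_{k+1/2}; \sigma, 1, u_{k+1/2}) = 0$ for all $\sigma$), I would test~(\ref{velocitydt}) with $v = B_{k+1/2}$ and~(\ref{magneticdt}) with $C = u_{k+1/2}$; the $\widetilde{b}_h$ term vanishes because $\llbracket 1\rrbracket = 0$, and a projection argument analogous to the energy step produces two canceling $\langle w\times U, H\rangle$ terms, after which the remaining midpoint pieces telescope to $\langle u_{k+1}, B_{k+1}\rangle = \langle u_k, B_k\rangle$. Magnetic helicity relies on the trapezoidal identity
\[
\frac{\langle A_{k+1}, B_{k+1}\rangle - \langle A_k, B_k\rangle}{\Delta t} = \left\langle \frac{A_{k+1}-A_k}{\Delta t}, B_{k+1/2}\right\rangle + \left\langle A_{k+1/2}, \frac{B_{k+1}-B_k}{\Delta t}\right\rangle.
\]
Using $\nabla \times A_{k+1/2} = B_{k+1/2}$ and integrating by parts (boundary terms vanishing because $A \times n|_{\partial\Omega} = 0$) collapses the right-hand side to $2\langle (B_{k+1}-B_k)/\Delta t, A_{k+1/2}\rangle$. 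Since $(B_{k+1}-B_k)/\Delta t \in U_h^{\dv}$, this equals $2\langle (B_{k+1}-B_k)/\Delta t, \pi_h^{\dv} A_{k+1/2}\rangle$; testing~(\ref{magneticdt}) with $C = \pi_h^{\dv} A_{k+1/2}$ and invoking~(\ref{ahvanishes}) (with the $\pi_h^{\dv}$ absorbed exactly as in the displayed calculation~(\ref{ddtAB})) then drives this to zero.
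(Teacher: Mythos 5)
Your proposal is correct and follows essentially the same route as the paper's proof: the same test functions ($q=\dv u_{k+1}$, $\sigma=1$, $\sigma=\rho_{k+1/2}$, $v=u_{k+1/2}$ with $C=B_{k+1/2}$ and $\sigma=\theta$, $v=B_{k+1/2}$ with $C=u_{k+1/2}$, $C=\pi_h^{\dv}A_{k+1/2}$), the same use of identity~(\ref{rhousquared}) together with~(\ref{thetadt}), and the same propagation of $\dv B=0$ via the pointwise form of~(\ref{magneticdt}). The only cosmetic difference is that the paper first repackages~(\ref{velocitydt}-\ref{alphadt}) into the $a_h$/$\widetilde{b}_h$ form via Lemma~\ref{lemma:rectify} and then invokes~(\ref{ahalt}), whereas you verify the cancellations $\langle\alpha,u_{k+1/2}\rangle=-\langle J\times H,U\rangle=-\langle\nabla\times E,B_{k+1/2}\rangle$ by unrolling the projection definitions directly, which amounts to the same computation.
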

\begin{proof}
Let us rewrite the scheme~(\ref{velocitydt}-\ref{alphadt}) in terms of the trilinear forms~(\ref{ah_comp}) and~(\ref{bhupwind}) using the techniques in Lemma~\ref{lemma:rectify}.  To simpilfy notation, we suppress subscripts on quantities evaluated at step $k+1/2$.  Thus, we abbreviate $u_{k+1/2}$, $B_{k+1/2}$, $\rho_{k+1/2}$, and $(\rho u)_{k+1/2}$ as $u$, $B$, $\rho$, and $\rho u$, respectively.  We also denote $D_{\Delta t} (\rho u) = \frac{\rho_{k+1} u_{k+1} - \rho_k u_k}{\Delta t}$, $D_{\Delta t} B = \frac{B_{k+1}-B_k}{\Delta t}$, etc.   In analogy  with~(\ref{velocityh_cons}-\ref{incompressibleh_cons}), the scheme~(\ref{velocitydt}-\ref{alphadt}) is equivalent to
\begin{align}
\langle D_{\Delta t} (\rho u), v \rangle + a_h(\rho u, u, v) - a_h(B, B, v) & \nonumber\\ + \frac{1}{2} \widetilde{b}_h(u; u_k \cdot u_{k+1}, \rho, v) - \langle p_{k+1}, \dv v \rangle &= 0, && \forall v \in U_h^{\dv}, \label{velocityh_consdt} \\
\langle D_{\Delta t} B, C \rangle + a_h(C,B,u) &= 0, && \forall C \in U_h^{\dv}, \label{magnetich_consdt} \\
\langle D_{\Delta t} \rho, \sigma \rangle + \widetilde{b}_h(u; \sigma, \rho, u) &= 0, && \forall \sigma \in F_h, \label{densityh_consdt} \\
\langle \dv u_{k+1}, q \rangle &= 0, &&\forall q \in Q_h, \label{incompressibleh_consdt}
\end{align}
It is immediate from~(\ref{incompressibleh_consdt}) that $\dv u_k \equiv 0$ for every $k$, since we can take $q=\dv u_{k+1}$ in~(\ref{incompressibleh_consdt}).  In addition, $\dv B_k \equiv 0$ for every $k$ since~(\ref{magneticdt}) and the containment $\nabla \times U_h^{\curl} \subseteq U_h^{\dv}$ imply that
\[
B_{k+1} = B_k - (\Delta t) (\nabla \times E)
\]
holds pointwise, so $\dv B_{k+1} = \dv B_k$.
Next, taking $\sigma = 1$ in the density equation~(\ref{densityh_consdt}) yields
\[
\frac{1}{\Delta t}\int_\Omega (\rho_{k+1}-\rho_k) \, dx = \langle D_{\Delta t} \rho, 1 \rangle = -\widetilde{b}_h(u;1,\rho,u) = 0.
\]
Taking $\sigma = \rho$ in~(\ref{densityh_consdt}) and using~(\ref{bhplusminus}), we deduce that
\begin{align*}
&\frac{1}{2\Delta t} \int_\Omega (\rho_{k+1}^2 - \rho_k^2) \, dx \\&= \left\langle \frac{\rho_{k+1}-\rho_k}{\Delta t}, \frac{\rho_k+\rho_{k+1}}{2} \right\rangle = \langle D_{\Delta t} \rho, \rho \rangle = -\widetilde{b}_h(u;\rho,\rho,u) = -\sum_{e \in \mathcal{E}_h} \int_e \beta_e(u) \llbracket \rho \rrbracket \cdot \llbracket \rho \rrbracket  \, ds \le 0,
\end{align*}
with equality if $\beta_e=0$ for every $e \in \mathcal{E}_h$.
To prove conservation of energy, we use the identity~(\ref{rhousquared}), which reads
\[
\frac{1}{2\Delta t} \int_\Omega \big( \rho_{k+1} u_{k+1} \cdot u_{k+1} - \rho_k u_k \cdot u_k \big) \, dx = \langle D_{\Delta t}(\rho u), u \rangle - \frac{1}{2} \langle D_{\Delta t} \rho, u_k \cdot u_{k+1} \rangle
\]
in our abbreviated notation.
Taking $v=u$ in the momentum equation~(\ref{velocityh_consdt}) and $C=B$ in the magnetic field equation~(\ref{magnetich_consdt}) then gives
\begin{align*}
&\frac{1}{2\Delta t} \int_\Omega \big( \rho_{k+1} u_{k+1} \cdot u_{k+1} + B_{k+1} \cdot B_{k+1} \big) - \big( \rho_k u_k \cdot u_k + B_k \cdot B_k \big) \, dx \\
&= \langle D_{\Delta t}(\rho u), u \rangle - \frac{1}{2} \langle D_{\Delta t} \rho, u \cdot u \rangle + \langle D_{\Delta t} B, B \rangle \\
&= \langle p_{k+1}, \dv u \rangle - a_h(\rho u, u, u) + a_h(B,B,u) - \frac{1}{2}\widetilde{b}_h(u; u_k \cdot u_{k+1}, \rho, u) - \frac{1}{2} \langle D_{\Delta t} \rho, u_k \cdot u_{k+1} \rangle - a_h(B,B,u)  \\
&= - \frac{1}{2}\widetilde{b}_h(u; \pi_h(u_k \cdot u_{k+1}), \rho, u) - \frac{1}{2} \langle D_{\Delta t} \rho, \pi_h(u_k \cdot u_{k+1}) \rangle \\
&= 0.
\end{align*}
Here, we have used the fact that $\dv u = \frac{\dv u_k + \dv u_{k+1}}{2} =  0$, $a_h$ is alternating in its last two arguments, and~(\ref{densityh_consdt}) holds.
If $\rho \equiv 1$, then taking $v=B$ in the momentum equation~(\ref{velocityh_consdt}) and $C=u$ in the magnetic field equation~(\ref{magnetich_consdt}) gives
\begin{align*}
&\int_\Omega (u_{k+1} \cdot B_{k+1} - u_k \cdot B_k) \, dx \\
&= \langle D_{\Delta t} u, B \rangle + \langle u, D_{\Delta t} B \rangle \\
&= \langle p_{k+1}, \dv B \rangle - a_h(u, u, B) + a_h(B,B,B) - \frac{1}{2}\widetilde{b}_h(u; u_k \cdot u_{k+1}, 1, B) - a_h(u,B,u) \\
&= 0.
\end{align*}
The last line above follows from the fact that $\dv B = 0$, $\widetilde{b}_h(u; u_k \cdot u_{k+1}, 1, B) = -\widetilde{b}_h(u; 1, u_k \cdot u_{k+1}, B) = 0$, and $a_h$ is alternating in its last two arguments.  Finally, to prove magnetic helicity conservation, we write
\begin{align*}
\int_\Omega (A_{k+1} \cdot B_{k+1} - A_k \cdot B_k) \, dx
&= \langle D_{\Delta t} A, B \rangle + \langle A, D_{\Delta t} B \rangle.
\end{align*}
The steps leading to~(\ref{ddtAB}) now carry over verbatim to the time-discrete setting, with $D_{\Delta t}$ replacing $\partial_t$.  It follows that $\int_\Omega (A_{k+1} \cdot B_{k+1} - A_k \cdot B_k) \, dx = 0$.
\end{proof}

\paragraph{Implementation.}
To implement~(\ref{velocitydt}-\ref{alphadt}), we used a fixed point iteration similar to the one described in~\cite{hu2020helicity}.  When stepping from time $t_k$ to time $t_{k+1}$, we first initialize $(u_{k+1},p_{k+1},B_{k+1},\rho_{k+1}) = (u_k,p_k,B_k,\rho_k)$ and compute $w$, $J$, $\theta$, $H$, $U$, $E$, and $\alpha$ from~(\ref{wdt}-\ref{alphadt}).  Then we fix all variables except $\rho_{k+1}$ and solve~(\ref{densitydt}) for $\rho_{k+1}$, we fix all variables except $B_{k+1}$ and solve~(\ref{magneticdt}) for $B_{k+1}$, and we fix all variables except $(u_{k+1},p_{k+1})$ and solve~(\ref{velocitydt}) and~(\ref{incompressibledt}) for $(u_{k+1},p_{k+1})$.  (If upwinding is adopted, we also fix $\beta_e(u_{k+1/2})$ in the last step to ensure the system of equations is linear.)  Then we repeat this process until a fixed point is reached.  All of the systems of equations encountered in this process are linear, so each iteration is relatively inexpensive.

\section{Numerical Examples} \label{sec:numerical}

\paragraph{Convergence.}
We tested the convergence of our methods in the following way.
On the two-dimensional domain $\Omega = [-1,1] \times [-1,1]$, we manufactured an analytical solution 
\begin{align}
u(x,y,t) &= \big( \cos t \cos(\pi x/2)  \sin(\pi y/2) + \sin t \sin \pi x \cos \pi y, \label{velocity_man}  \\
&\quad \quad \quad -\cos t \sin(\pi x/2) \cos(\pi y/2) - \sin t \cos \pi x \sin \pi y   \big), \nonumber \\
B(x,y,t) &= \big( -\sin t \cos(\pi x/2)  \sin(\pi y/2) + \cos t \sin \pi x \cos \pi y, \\
&\quad\quad\quad \sin t \sin(\pi x/2) \cos(\pi y/2) - \cos t \cos \pi x \sin \pi y   \big), \nonumber \\
\rho(x,y,t) &= 2 + \cos t \sin \pi x \cos \pi y + \sin t \cos \pi x \sin \pi y, \label{density_man}\\
p(x,y,t) &= \rho(x,y,t) |u(x,y,t)|^2 - 1, \label{pressure_man}
\end{align}
to~(\ref{velocity0}-\ref{IC}) by adding forcing terms to the right-hand sides of~(\ref{velocity0}-\ref{density0}).  
In other words, we numerically solved~(\ref{incompressible0}-\ref{IC}) and
\begin{align*}
\rho(\partial_t u + u \cdot \nabla u) - (\nabla \times B) \times B &= -\nabla p + f_u, \\
\partial_t B - \nabla \times (u \times B) &= f_B, \\
\partial_t \rho + \dv (\rho u) &= f_\rho,
\end{align*}
with $f_u,f_B,f_\rho$ and $u_0,B_0,\rho_0$ chosen to make the solution equal to~(\ref{velocity_man}-\ref{pressure_man}).
Note that for all $t$, the functions~(\ref{velocity_man}-\ref{pressure_man}) satisfy $\dv u = \dv B = 0$ in $\Omega$, $u \cdot n = B \cdot n = 0$ on $\partial \Omega$, and $\int_\Omega p \, dx \, dy = 0$.  We numerically solved~(\ref{velocity0}-\ref{IC}) with the forcing $f_u$, $f_B$, $f_ \rho  $,  on a sequence of uniform triangulations $\mathcal{T}_h$ of $\Omega$ with maximum element diameter $h = 2^{-j}$, $j=1,2,3,4$.  We used finite element spaces $U_h^{\dv} = RT_0(\mathcal{T}_h)$, $F_h = DG_0(\mathcal{T}_h)$, $Q_h = DG_0(\mathcal{T}_h) \cap L^2_{\int=0}(\Omega)$, $U_h^{\curl} = NED_0(\mathcal{T}_h)$, and $CG_0(\mathcal{T}_h)$ for $w,J$, and $E$ (recall Remark~\ref{remark:dim2}).  We used a small time step $\Delta t = 0.0025$ to ensure temporal discretization errors were negligible, and we measured the errors in the numerical solution at time $t=0.5$.  The results for four methods are shown in Table~\ref{tab:h}:~(\ref{velocityh_simp}-\ref{Eh_simp}) with and without upwinding, and~(\ref{velocityh_comp}-\ref{alphah_comp}) with and without upwinding.  In the table, the exact solution is denoted $(u,B,\rho,p)$, and the numerical solution is denoted $(u_h,B_h,\rho_h,p_h)$.  The errors are measured in the $L^2(\Omega)$-norm, which we denote by $\|\cdot\|$ throughout this section.

\begin{table}[t]
\centering
\maketable{mhdconv_o0.dat}
\caption{$L^2$-errors in the velocity, magnetic field, density, and pressure at time $t=0.5$.}
\label{tab:h}
\end{table}

The results in Table~\ref{tab:h} indicate that the $L^2$-errors converge linearly to zero for the method~(\ref{velocityh_simp}-\ref{Eh_simp}), but sublinearly for the method~(\ref{velocityh_comp}-\ref{alphah_comp}).  Upwinding had little to no effect on accuracy in this experiment. 
We suspect that the sublinear convergence of~(\ref{velocityh_comp}-\ref{alphah_comp}) is attributable to the boundary conditions imposed during the projections of $u \in U_h^{\dv}$ and $B \in U_h^{\dv}$ onto $U_h^{\curl}$ in~(\ref{Bproj_comp}-\ref{uproj_comp}).  Indeed, $u$ and $B$ satisfy $u \cdot n = B \cdot n = 0$ on $\partial \Omega$, but their projections $U$ and $H$ onto $U_h^{\curl}$ satisfy $U \times n = H \times n = 0$ on $\partial \Omega$. 

Figure~\ref{fig:2d} shows the initial conditions $\rho(x,y,0)$ and $B(x,y,0)$, as well as the numerical solution $\rho(x,y,t)$, and $B(x,y,t)$ obtained at time $t=0.5$ with $h=2^{-4}$.
\begin{figure}
\centering
%\includegraphics[scale=0.15,trim=8in 0in 8in 0in,clip=true]{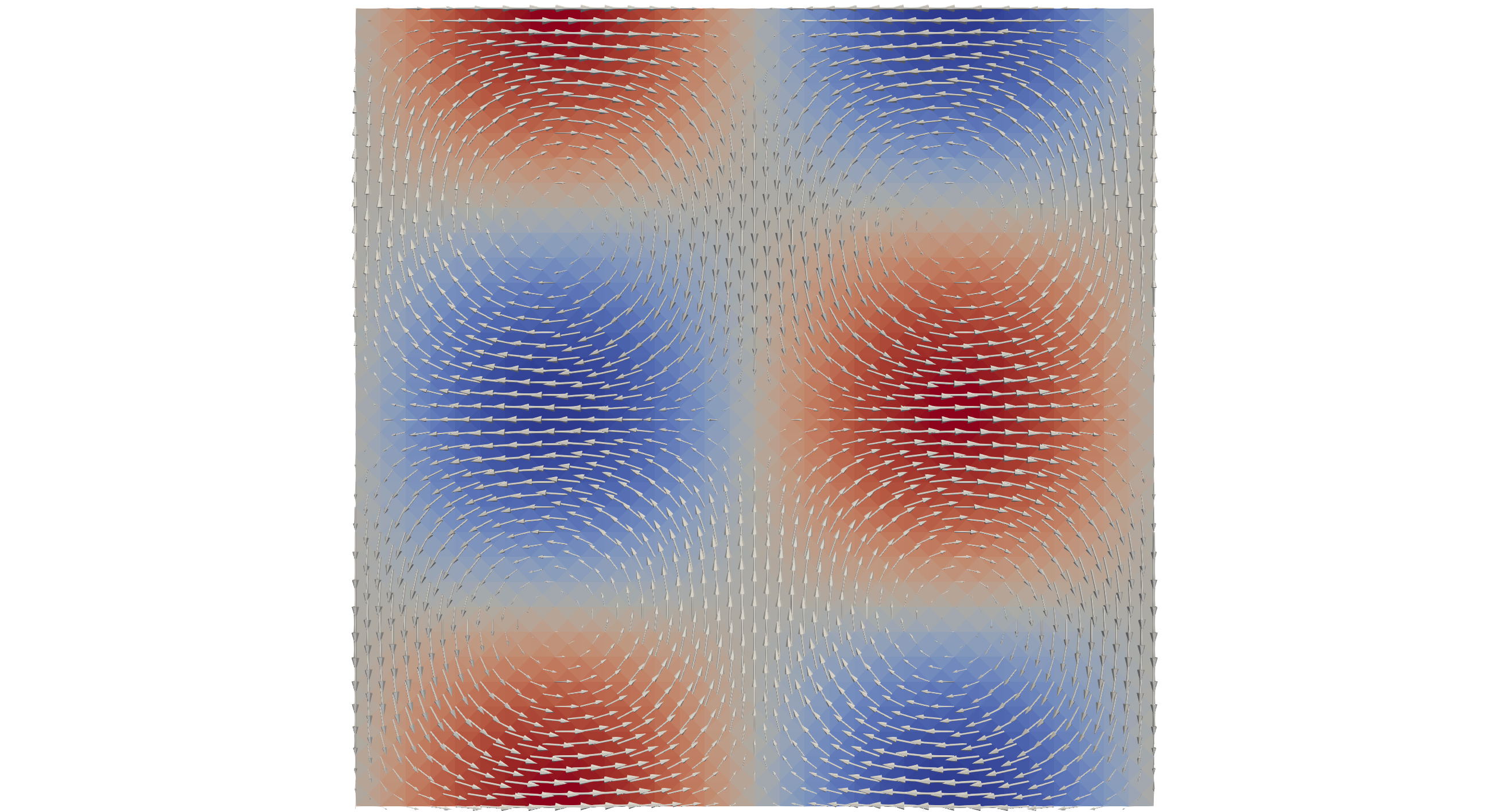}
%\includegraphics[scale=0.15,trim=8in 0in 8in 0in,clip=true]{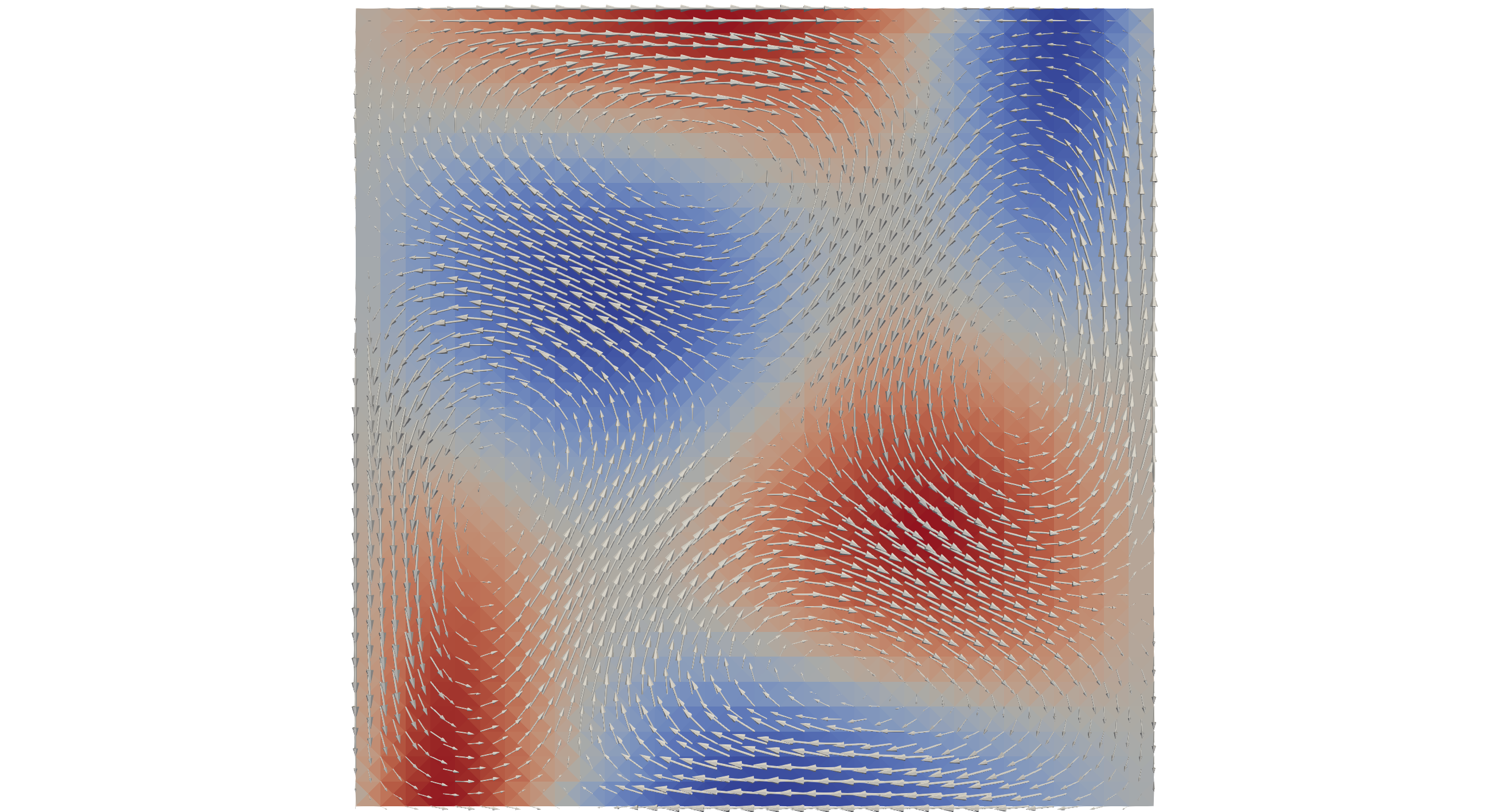}
%\caption{Density contours and magnetic field at times $t=0$ (left) and $t=0.5$ (right) in the absence of forcing.}
\includegraphics[scale=0.1,trim=8in 0in 8in 0in,clip=true]{t0p0.png}
\includegraphics[scale=0.0845,trim=6in 0in 6in 0in,clip=true]{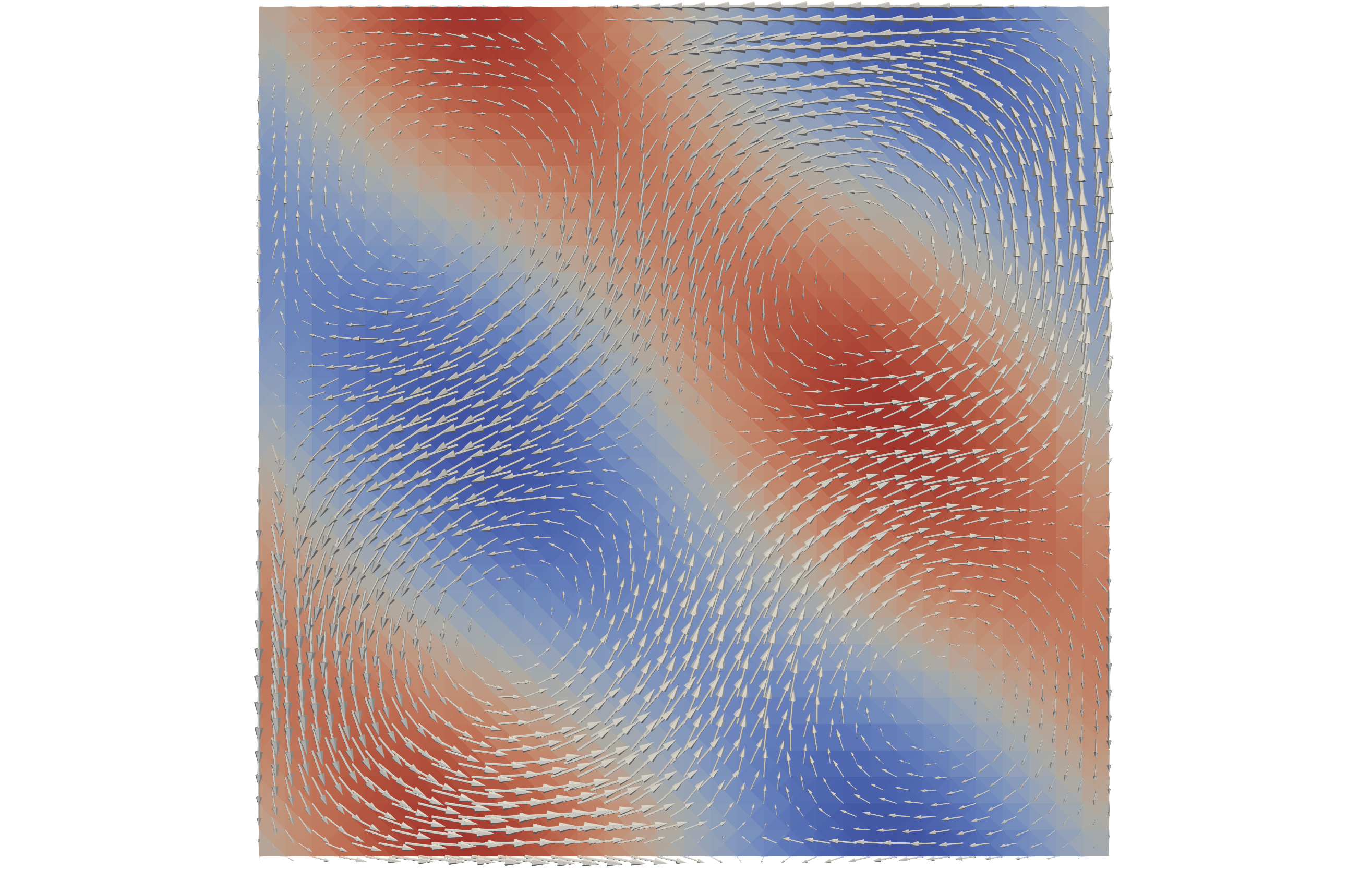}
\includegraphics[scale=0.1,trim=8in 0in 8in 0in,clip=true]{t0p5.png}
\caption{Density contours and magnetic field at time $t=0$ (left), at time $t=0.5$ in the presence of forcing (middle), and at time $t=0.5$ in the absence of forcing (right).}
\label{fig:2d}
\end{figure}

\paragraph{Structure preservation.}
To illustrate the structure-preserving properties of our methods, we performed a simulation on the three-dimensional domain $\Omega = [-1,1]^3$  with initial conditions
\begin{align}
u(x,y,z,0) &= \left( y e^{-4(x^2+y^2)}, -x e^{-4(x^2+y^2)}, 0 \right), \label{u03d} \\
B(x,y,z,0) &= \nabla \times \left( (1-x^2)(1-y^2)(1-z^2) v \right), \label{B03d} \\
\rho(x,y,z,0) &= 2 + \sin(xy), \label{rho03d}
\end{align}
where $v = \frac{1}{2}(\sin \pi x, \sin \pi y, \sin \pi z)$.  Note that the vector field $u(x,y,z,0)$ above does not satisfy $u \cdot n = 0$ on $\partial \Omega$; hence, we used the nearest (in the $L^2$-norm) element of $U_h^{\dv} \cap \mathring{H}(\dv,\Omega)$ to $u(x,y,z, 0)$ as our initial condition for $u$ in the simulations.  We used a time step $\Delta t = 0.02$, a uniform triangulation $\mathcal{T}_h$ of $\Omega$ with maximum element diameter $h \approx 0.433$, and finite element spaces $U_h^{\dv} = RT_0(\mathcal{T}_h)$, $F_h = DG_0(\mathcal{T}_h)$, $Q_h = DG_0(\mathcal{T}_h) \cap L^2_{\int=0}(\Omega)$, and $U_h^{\curl} = NED_0(\mathcal{T}_h)$.  Figure~\ref{fig:invariants} plots the evolution of the mass, total squared density, energy, magnetic helicity, divergence of $u$, and divergence of $B$ for four different methods:~(\ref{velocityh_simp}-\ref{Eh_simp}) and~(\ref{velocityh_comp}-\ref{alphah_comp}), each with and without upwinding.  As expected, all of the aforementioned quantities are preserved to machine precision when~(\ref{velocityh_comp}-\ref{alphah_comp}) is used without upwinding.  Upwinding introduces a drift in $\int_\Omega \rho^2 \, dx$, and the use of~(\ref{velocityh_simp}-\ref{Eh_simp}) introduces a drift in the magnetic helicity $\int_\Omega A \cdot B \, dx$.  Here, we computed $A \in U_h^{\curl}$ by solving the (underdetermined) linear system
\[
\langle \nabla \times A, \nabla \times V \rangle = \langle B, \nabla \times V \rangle, \quad \forall V \in U_h^{\curl}.
\]

Note that cross-helicity $\int_\Omega u \cdot B \, dx$ is not plotted in Figure~\ref{fig:invariants} because it is not a conserved quantity of (\ref{velocity0}-\ref{IC}) when $\rho$ is not constant.  To test conservation of cross-helicity, we repeated the above experiment with the initial condition~(\ref{rho03d}) replaced by $\rho(x,y,z,0) = 1$.  The results, plotted in Figure~\ref{fig:invariants1}, show that cross-helicity, energy, magnetic helicity, and the constraints $\dv u = \dv B =0$ are conserved to machine precision by~(\ref{velocityh_comp}-\ref{alphah_comp}), whereas~(\ref{velocityh_simp}-\ref{Eh_simp}) conserves all but magnetic helicity.

\begin{figure}
\centering
\hspace{-0.4in}
\begin{tikzpicture}
\begin{groupplot}[
group style={
    group name=my plots,
    group size=2 by 2,
    xlabels at=edge bottom,
    ylabels at=edge left,
    horizontal sep=2cm,vertical sep=2cm},
    ymode=log,xlabel=$t$,ylabel=Error,
    ylabel style={yshift=-0.1cm},
    ymin = 10^-20,
    ymax = 1,
    legend style={at={(1.0,0.0)},
	anchor=south,
	column sep=1ex,
	text=black},
    legend columns=6
    ]
\nextgroupplot[legend to name=testLegend,title={(\ref{velocityh_simp}-\ref{Eh_simp}), no upwind}]
\makeplot{mhd_n3o0d0r2a0b0_invariants.dat}
\legend{$\int_\Omega \rho \, dx$,$\int_\Omega \rho^2 \, dx$,$\frac{1}{2} \int_\Omega u \cdot u + B \cdot B \, dx$,$\int_\Omega A \cdot B \, dx$,$\|\dv u\|$,$\|\dv B\|$}
\nextgroupplot[title={(\ref{velocityh_comp}-\ref{alphah_comp}), no upwind}]
\makeplot{mhd_n3o0d0r2a1b0_invariants.dat}
\nextgroupplot[title={(\ref{velocityh_simp}-\ref{Eh_simp}), upwind}]
\makeplot{mhd_n3o0d0r2a0b1_invariants.dat}
\nextgroupplot[title={(\ref{velocityh_comp}-\ref{alphah_comp}), upwind}]
\makeplot{mhd_n3o0d0r2a1b1_invariants.dat}
\end{groupplot}
\end{tikzpicture}
\ref{testLegend}
\caption{Errors $|F(t)-F(0)|$ in conserved quantities $F(t)$ during a three-dimensional simulation with variable density.  Results are plotted for four different methods: (\ref{velocityh_simp}-\ref{Eh_simp}) and (\ref{velocityh_comp}-\ref{alphah_comp}), each with and without upwinding.}
\label{fig:invariants}
\end{figure}
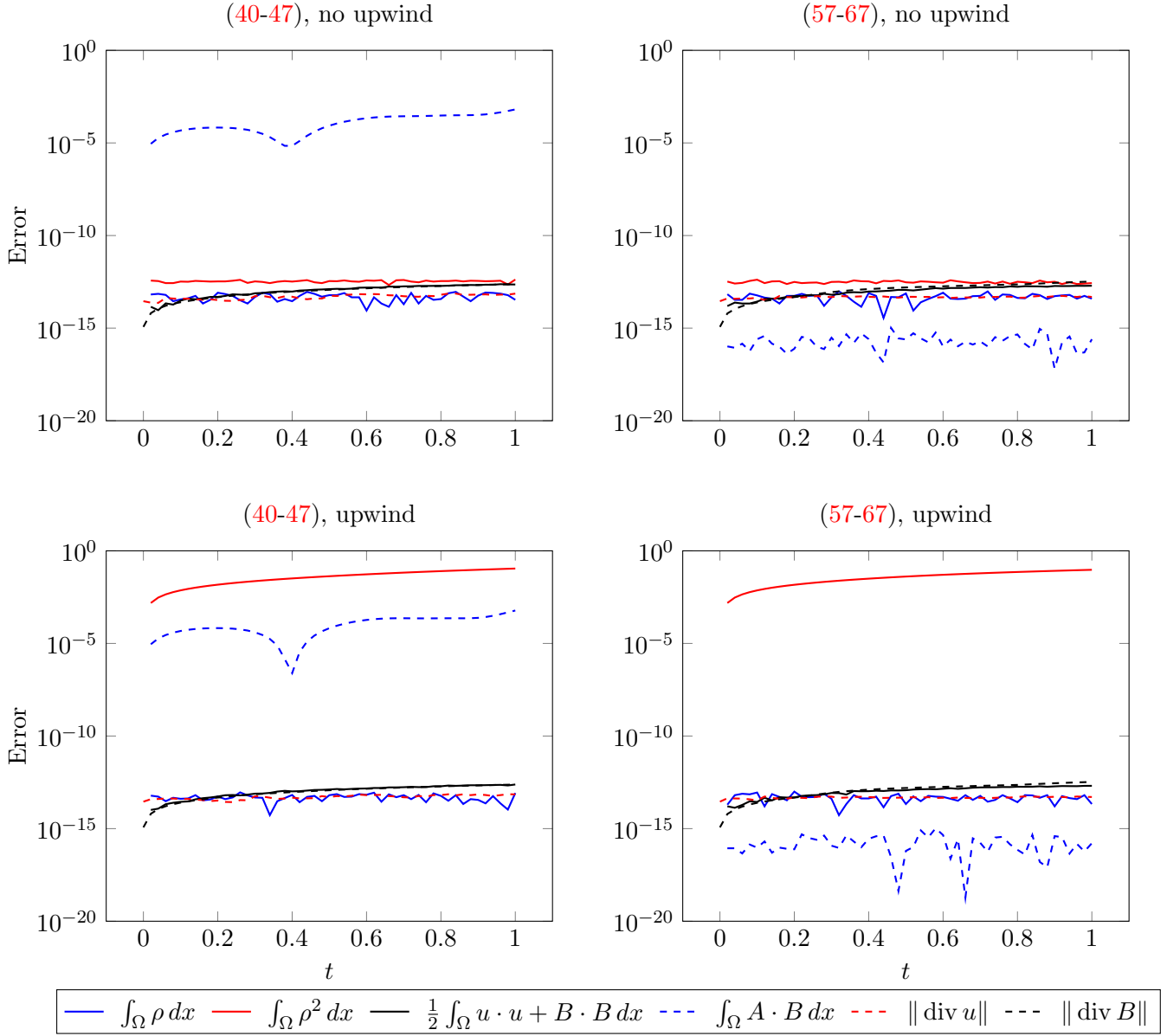

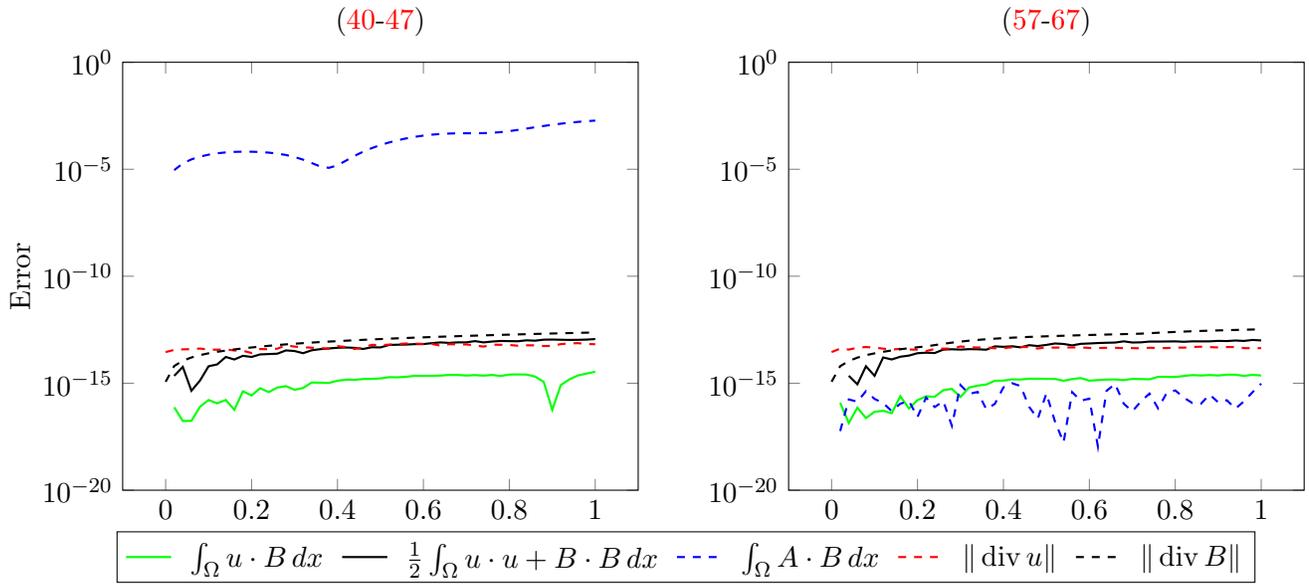
\begin{figure}
\centering
\hspace{-0.4in}
\begin{tikzpicture}
\begin{groupplot}[
group style={
    group name=my plots,
    group size=2 by 2,
    xlabels at=edge bottom,
    ylabels at=edge left,
    horizontal sep=2cm,vertical sep=2cm},
    ymode=log,xlabel=$t$,ylabel=Error,
    ylabel style={yshift=-0.1cm},
    ymin = 10^-20,
    ymax = 1,
    legend style={at={(1.0,0.0)},
	anchor=south,
	column sep=1ex,
	text=black},
    legend columns=5
    ]
\nextgroupplot[legend to name=testLegendb,title={(\ref{velocityh_simp}-\ref{Eh_simp})}]
\makeplotone{mhd_n3o0d0r2a0b0c1_invariants.dat}
\legend{$\int_\Omega u \cdot B \, dx$,$\frac{1}{2} \int_\Omega u \cdot u + B \cdot B \, dx$,$\int_\Omega A \cdot B \, dx$,$\|\dv u\|$,$\|\dv B\|$}
\nextgroupplot[title={(\ref{velocityh_comp}-\ref{alphah_comp})}]
\makeplotone{mhd_n3o0d0r2a1b0c1_invariants.dat}
\end{groupplot}
\end{tikzpicture}
\ref{testLegendb}
\caption{Errors $|F(t)-F(0)|$ in conserved quantities $F(t)$ during a three-dimensional simulation with constant density.  Results are plotted for two different methods: (\ref{velocityh_simp}-\ref{Eh_simp}) and (\ref{velocityh_comp}-\ref{alphah_comp}).} 
\label{fig:invariants1}
\end{figure}

\section{Acknowledgements}

We thank Kaibo Hu for helpful discussions.  EG was partially supported by NSF grants DMS-1703719 and DMS-2012427.  FGB was partially supported by the ANR project GEOMFLUID, ANR-14-CE23-0002-01.

\printbibliography

\end{document}